\documentclass[12pt]{amsart}
\usepackage[linkcolor=blue, citecolor = blue]{hyperref}
\hypersetup{colorlinks=true}

\usepackage{amsthm,amsfonts,amsmath,amssymb,amscd} 

\usepackage{verbatim}
\usepackage{float}
\usepackage{wrapfig}
\usepackage{mathtools}
\usepackage[color=orange!20, linecolor=orange]{todonotes}

\usepackage{caption, subcaption}
\usepackage{enumitem}

\usepackage{ytableau}

\usepackage{tikz}
\usepackage{xcolor}  % For color definitions
\usetikzlibrary{decorations.markings, patterns,patterns.meta}
\tikzstyle{vertex}=[circle, draw, inner sep=0pt, minimum size=4pt]

\usetikzlibrary{arrows,automata}
\usepackage{tikz, tikz-3dplot}
\usepackage{tkz-graph}
\usetikzlibrary[positioning,patterns]
\usepackage{tikz-3dplot}
\usepackage{wasysym}

\newtheorem{theorem}{Theorem}[section]
\newtheorem{proposition}[theorem]{Proposition}
\newtheorem{lemma}[theorem]{Lemma}
\newtheorem{corollary}[theorem]{Corollary}
 
\theoremstyle{definition}
\newtheorem{definition}[theorem]{Definition}
\newtheorem{example}[theorem]{Example}

\theoremstyle{remark}
\newtheorem{remark}{Remark}

\newcommand{\sgn}{\mathrm{sgn}}

\renewcommand*{\det}{\qopname\relax o{det}}

\newcommand{\cor}{\mathrm{cor}}
\newcommand{\Cor}{\mathrm{Cor}}
\newcommand{\sh}{\mathrm{sh}}

\newcommand{\showAllXZ}[4]{ % 1-xl,2-xr,3-z,4-style
    \foreach \px in {#1, ..., #2}{
        \foreach \pz in {#3, ..., -1} {
            \draw[#4] (\px,0,\pz) -- (\px, 0, \pz + 1);
            \draw[#4] (\px,0,\pz) -- (\px - 1, 0, \pz + 1);
        }
    }
}

\newcommand{\drawrectZ}[7][cycle]{ % [1-cycle],2-style,3-z,4-xl,5-xr,6-yl,7-yr,
    \draw [#2] (#4, #6, #3) -- (#4, #7, #3) -- (#5, #7, #3) -- (#5, #6, #3) -- #1;
}
\newcommand{\drawplaneZ}[5]{ % 1-style,2-xl,3-xr,4-y,5-z
    \foreach \i in { 0,..., #4 } {
        \draw [-, #1] (#2, \i, #5) -- (#3, \i, #5);
    }
    \foreach \i in { #2,..., #3 } {
        \draw [-, #1] (\i, 0, #5) -- (\i, #4, #5);
    }
}
\newcommand{\drawplaneX}[5]{ % 1-style,2-x,3,4-yl,yr,5-z
    \foreach \i in { 0,..., #5 } {
        \draw [#1] (#2, 0, \i) -- (#2, 4, \i);
    }
    \foreach \i in { #3,..., #4 } {
        \draw [#1] (#2, \i, 0) -- (#2, \i, #5);
    }
}

\newcommand{\drawTop}[2]{%
  \coordinate (A) at ($#1$);
  \coordinate (B) at ($(A)+(-1,0,0)$);
  \coordinate (C) at ($(B)+(0,0,+1)$);
  \coordinate (D) at ($(C)+(+1,0,0)$);
  \draw[#2] (A) -- (B) -- (C) -- (D) -- cycle;
}
\newcommand{\drawLeft}[2]{%
  \coordinate (A) at ($#1$);
  \coordinate (B) at ($(A)+(-1,0,0)$);
  \coordinate (C) at ($(B)+(0,-1,0)$);
  \coordinate (D) at ($(C)+(+1,0,0)$);
  \draw[#2] (A) -- (B) -- (C) -- (D) -- cycle;
}
\newcommand{\drawRight}[2]{%
  \coordinate (A) at ($#1$);
  \coordinate (B) at ($(A)+(0,0,+1)$);
  \coordinate (C) at ($(B)+(0,-1,0)$);
  \coordinate (D) at ($(C)+(0,0,-1)$);
  \draw[#2] (A) -- (B) -- (C) -- (D) -- cycle;
}

\newcommand{\picturepp}{
  \begin{tikzpicture}[scale=0.75,x=1cm,y=1cm,z=0.3cm,
    rotate around x=-10,
    rotate around y=30,%-90
    rotate around z=-5 % 90
    ]
    \draw[->,dashed] (xyz cs:x=3) -- (xyz cs:x=4) node[below] {$k \le c$};
    \draw[->,dashed] (xyz cs:y=4) -- (xyz cs:y=4.5) node[above] {$i \le a$};
    \draw[->,dashed] (xyz cs:z=-3) -- (xyz cs:z=-4) node[below] {$j \le b$};
    \drawTop{(1,4,-1)}{-, black, thick, fill=cyan};
    \drawTop{(2,3,-1)}{-, black, thick, fill=cyan};
    \drawTop{(3,3,-1)}{-, black, thick, fill=cyan};
    
    \drawTop{(1,3,-2)}{-, black, thick, fill=cyan};
    \drawTop{(2,3,-2)}{-, black, thick, fill=cyan};
    \drawTop{(3,1,-2)}{-, black, thick, fill=cyan};

    \drawTop{(1,2,-3)}{-, black, thick, fill=cyan};
    \drawTop{(2,0,-3)}{-, black, thick}; % empty
    \drawTop{(3,0,-3)}{-, black, thick}; % empty
    \drawLeft{(2,4,0)}{-, black, thick}; % empty
    \drawLeft{(3,4,0)}{-, black, thick}; % empty
    \drawLeft{(1,4,-1)}{-, black, thick, fill=orange};
    
    \drawLeft{(1,3,-2)}{-, black, thick, fill=orange};
    \drawLeft{(2,3,-2)}{-, black, thick, fill=orange};
    \drawLeft{(3,3,-1)}{-, black, thick, fill=orange};

    \drawLeft{(1,2,-3)}{-, black, thick, fill=orange};
    \drawLeft{(2,2,-2)}{-, black, thick, fill=orange};
    \drawLeft{(3,2,-1)}{-, black, thick, fill=orange};

    \drawLeft{(1,1,-3)}{-, black, thick, fill=orange};
    \drawLeft{(2,1,-2)}{-, black, thick, fill=orange};
    \drawLeft{(3,1,-2)}{-, black, thick, fill=orange};
    
    \drawRight{(1,4,-1)}{-, black, thick}; \node at (1,3.5,-0.5) {\(1\)};
    \drawRight{(0,4,-2)}{-, black, thick}; \node at (0,3.5,-1.5) {\(0\)};
    \drawRight{(0,4,-3)}{-, black, thick}; \node at (0,3.5,-2.5) {\(0\)};

    \drawRight{(3,3,-1)}{-, black, thick}; \node at (3,2.5,-0.5) {$3$};
    \drawRight{(2,3,-2)}{-, black, thick}; \node at (2,2.5,-1.5) {$2$};
    \drawRight{(0,3,-3)}{-, black, thick}; \node at (0,2.5,-2.5) {$0$};

    \drawRight{(3,2,-1)}{-, black, thick}; \node at (3,1.5,-0.5) {$3$};
    \drawRight{(2,2,-2)}{-, black, thick}; \node at (2,1.5,-1.5) {$2$};
    \drawRight{(1,2,-3)}{-, black, thick}; \node at (1,1.5,-2.5) {$1$};

    \drawRight{(3,1,-1)}{-, black, thick}; \node at (3,0.5,-0.5) {$3$};
    \drawRight{(3,1,-2)}{-, black, thick}; \node at (3,0.5,-1.5) {$3$};
    \drawRight{(1,1,-3)}{-, black, thick}; \node at (1,0.5,-2.5) {$1$};
    \end{tikzpicture}
}
\newcommand{\picturecorner}{
  \begin{tikzpicture}[scale=0.75, x=1cm,y=1cm,z=0.3cm,
    rotate around x=-10,
    rotate around y=30,%-90
    rotate around z=-5 % 90
    ]
    \draw[->,dashed] (xyz cs:x=3) -- (xyz cs:x=4) node[below] {$k \le c$};
    \draw[->,dashed] (xyz cs:y=4) -- (xyz cs:y=4.5) node[above] {$i \le a$};
    \draw[->,dashed] (xyz cs:z=-3) -- (xyz cs:z=-4) node[below] {$j \le b$};
    \drawTop{(2,3,-1)}{-, gray, very thin, dashed};
    \drawTop{(3,0,-3)}{-, gray, very thin, dashed}; % empty
    \drawTop{(2,0,-3)}{-, gray, very thin, dashed}; % empty
    \drawLeft{(2,4,0)}{-, gray, very thin, dashed}; % empty
    \drawLeft{(3,4,0)}{-, gray, very thin, dashed}; % empty
    \drawLeft{(3,2,-1)}{-, gray, very thin, dashed};
    \drawLeft{(2,2,-2)}{-, gray, very thin, dashed};
    \drawLeft{(2,1,-2)}{-, gray, very thin, dashed};
    \drawLeft{(1,1,-3)}{-, gray, very thin, dashed};
    \drawRight{(1,4,-1)}{-, gray, very thin, dashed}; \node at (1,3.5,-0.5) {\(1\)};
    \drawRight{(0,4,-2)}{-, gray, very thin, dashed}; \node at (0,3.5,-1.5) {\(0\)};
    \drawRight{(0,4,-3)}{-, gray, very thin, dashed}; \node at (0,3.5,-2.5) {\(0\)};
    
    \drawRight{(3,3,-1)}{-, gray, very thin, dashed}; \node at (3,2.5,-0.5) {$3$};
    \drawRight{(2,3,-2)}{-, gray, very thin, dashed}; \node at (2,2.5,-1.5) {$2$};
    \drawRight{(0,3,-3)}{-, gray, very thin, dashed}; \node at (0,2.5,-2.5) {$0$};
    
    \drawRight{(3,2,-1)}{-, gray, very thin, dashed}; \node at (3,1.5,-0.5) {$3$};
    \drawRight{(2,2,-2)}{-, gray, very thin, dashed}; \node at (2,1.5,-1.5) {$2$};
    \drawRight{(1,2,-3)}{-, gray, very thin, dashed}; \node at (1,1.5,-2.5) {$1$};
    
    \drawRight{(3,1,-1)}{-, gray, very thin, dashed}; \node at (3,0.5,-0.5) {$3$};
    \drawRight{(3,1,-2)}{-, gray, very thin, dashed}; \node at (3,0.5,-1.5) {$3$};
    \drawRight{(1,1,-3)}{-, gray, very thin, dashed}; \node at (1,0.5,-2.5) {$1$};
    \drawTop{(1,4,-1)}{-, black, very thick, fill=cyan};
    \drawTop{(3,3,-1)}{-, black, very thick, fill=cyan};
    \drawTop{(1,3,-2)}{-, black, very thick, fill=cyan};
    \drawTop{(2,3,-2)}{-, black, very thick, fill=cyan};
    \drawTop{(3,1,-2)}{-, black, very thick, fill=cyan};
    \drawTop{(1,2,-3)}{-, black, very thick, fill=cyan};
    \drawLeft{(1,4,-1)}{-, black, very thick, fill=orange};
    \drawLeft{(1,3,-2)}{-, black, very thick, fill=orange};
    \drawLeft{(2,3,-2)}{-, black, very thick, fill=orange};
    \drawLeft{(3,3,-1)}{-, black, very thick, fill=orange};
    \drawLeft{(1,2,-3)}{-, black, very thick, fill=orange};
    \drawLeft{(3,1,-2)}{-, black, very thick, fill=orange};
\end{tikzpicture}
}

\newcommand{\pictureboxmainshift}{%
\begin{tikzpicture}[x=1cm,y=1cm,z=1cm,
    rotate around x=0,
    rotate around y=0,
    rotate around z=0
  ]
    \draw[->, dashed, very thick] (xyz cs:x=-3) -- (xyz cs:x=5) node[above] {$x(c)$};
    \draw[->, dashed, very thick] (xyz cs:y=2) -- (xyz cs:y=5) node[right] {$y(a)$};
    \draw[->, dashed, very thick] (xyz cs:z=0) -- (xyz cs:z=-3.5) node[left] {$z(b)$};
    \drawTop{(1,4,-1)}{-, darkgray, dashed, very thin};
    \drawTop{(1,4,-2)}{-, darkgray, dashed, very thin};
    \drawLeft{(1,4,-2)}{-, darkgray, dashed, very thin};
    \drawLeft{(1,3,-2)}{-, darkgray, dashed, very thin};
    \drawTop{(1,2,-3)}{-, darkgray, dashed, very thin};
    \drawLeft{(1,2,-3)}{-, darkgray, dashed, very thin};
    \drawLeft{(1,1,-3)}{-, darkgray, dashed, very thin};
    \drawTop{(2,4,-1)}{-, darkgray, dashed, very thin};
    \drawLeft{(2,4,-1)}{-, darkgray, dashed, very thin};
    \drawTop{(2,3,-2)}{-, darkgray, dashed, very thin};
    \drawLeft{(2,3,-2)}{-, darkgray, dashed, very thin};
    \drawTop{(2,2,-3)}{-, darkgray, dashed, very thin};
    \drawLeft{(2,2,-3)}{-, darkgray, dashed, very thin};
    \drawLeft{(2,1,-3)}{-, darkgray, dashed, very thin};
    \drawRight{(2,4,-1)}{-, darkgray, dashed, very thin};
    \drawTop{(3,3,-1)}{-, darkgray, dashed, very thin};
    \drawTop{(3,3,-2)}{-, darkgray, dashed, very thin};
    \drawLeft{(3,3,-2)}{-, darkgray, dashed, very thin};
    \drawLeft{(3,2,-2)}{-, darkgray, dashed, very thin};
    \drawTop{(3,1,-3)}{-, darkgray, dashed, very thin};
    \drawLeft{(3,1,-3)}{-, darkgray, dashed, very thin};
    \drawRight{(3,3,-1)}{-, darkgray, dashed, very thin};
    \drawTop{(4,2,-1)}{-, darkgray, dashed, very thin};
    \drawTop{(4,2,-2)}{-, darkgray, dashed, very thin};
    \drawLeft{(4,2,-2)}{-, darkgray, dashed, very thin};
    \drawTop{(4,1,-3)}{-, darkgray, dashed, very thin};
    \drawLeft{(4,1,-3)}{-, darkgray, dashed, very thin};
    \drawRight{(4,1,-3)}{-, darkgray, dashed, very thin};
    \drawRight{(4,1,-2)}{-, darkgray, dashed, very thin};
    \drawRight{(4,2,-2)}{-, darkgray, dashed, very thin};
    \drawRight{(4,1,-1)}{-, darkgray, dashed, very thin};
    \drawRight{(4,2,-1)}{-, darkgray, dashed, very thin};
    
    \filldraw[blue] (1,0,-3) circle (2pt) node[below] {$A_1$};
    \filldraw[black] (1,4,0) circle (2pt) node[above]{$B_1$};

    \draw [->, blue, very thick]
        (1, 0, -3) 
        -- (0, 0, -2)
        -- (-1, 0, -1) 
        -- (-1, 0, 0);
    \draw [->, black, very thick]
        (-1, 0, 0) -- (-1, 2, 0) 
        -- (0, 3, 0)
        -- (1, 4, 0);
    
    \filldraw[blue] (2,0,-3) circle (2pt) node[below] {$A_2$};
    \filldraw[black] (2,4,0) circle (2pt) node[above] {$B_2$};
    \draw [->, blue, very thick]
        (2, 0, -3) 
        -- (1, 0, -2)
        -- (0, 0, -1)
        -- (0, 0, 0);
    \draw [->, black, very thick]
        (0, 0, 0) 
        -- (0, 1, 0)
        -- (1, 2, 0)
        -- (1, 3, 0)
        -- (2, 4, 0);
    
    \filldraw[blue] (3,0,-3) circle (2pt) node[below] {$A_3$};
    \filldraw[black] (3,4,0) circle (2pt) node[above] {$B_3$};
    \draw [->, blue, very thick]
        (3, 0, -3)
        -- (2, 0, -2)
        -- (2, 0, -1)
        -- (1, 0, 0);
    \draw [->, black, very thick]
        (1, 0, 0)
        -- (1, 1, 0)
        -- (2, 2, 0)
        -- (3, 3, 0)
        -- (3, 4, 0);
    
    \filldraw[blue] (4,0,-3) circle (2pt) node[below] {$A_4$};
    \filldraw[black] (4,4,0) circle (2pt) node[above] {$B_4$};
    \draw [->, blue, very thick]
        (4, 0, -3)
        -- (3, 0, -2)
        -- (3, 0, -1)
        -- (2, 0, 0);
    \draw [->, black, very thick]
        (2, 0, 0)
        -- (3, 1, 0)
        -- (4, 2, 0)
        -- (4, 3, 0)
        -- (4, 4, 0);

    \filldraw[orange] (3.5, 0, -2.5) circle (2pt) node[left] {$q^3$};
    \filldraw[orange] (3.5, 0, 0) circle (2pt); 
    \filldraw[orange] (3.5, 1.5, 0) circle (2pt) node[left] {$q^1$};
    \draw[orange, very thick] (3.5, 0, -2.5) -- (3.5, 0, 0) -- (3.5, 1.5, 0);

    \filldraw[orange] (2.5, 0, -0.5) circle (2pt) node[left] {$q^1$};
    \filldraw[orange] (2.5, 0, 0) circle (2pt);
    \filldraw[orange] (2.5, 0.5, 0) circle (2pt) node[left] {$q^0$};
    \draw[orange, very thick] (2.5, 0, -0.5) -- (2.5, 0, 0) -- (2.5, 0.5, 0);
    
    \filldraw[blue] (-1,0,0) circle (1.5pt);
    \filldraw[blue] (0,0,0) circle (1.5pt);
    \filldraw[blue] (1,0,0) circle (1.5pt);
    \filldraw[blue] (2,0,0) circle (1.5pt);
  \end{tikzpicture}
}

\newcommand{\pictureprojcommon} {
    \draw[->, dashed, very thin, gray] (xyz cs:x=-4) -- (xyz cs:x=3) node[above] {$x$};
    \draw[->, dashed, very thin, gray] (xyz cs:y=0) -- (xyz cs:y=5) node[right] {$y$};
    \draw[->, dashed, very thin, gray] (xyz cs:z=0) -- (xyz cs:z=-6) node[above] {$z$};
    \drawplaneZ{dashed, gray, very thin}{-4}{0}{4}{0};
    \filldraw[black] (0, 0, -5) circle (2pt) node[below] {$A_i$};
    \filldraw[black] (0, 4, 0) circle (2pt) node[anchor=south east] {$B_i$};
    \filldraw[black] (-3, 0, 0) circle (2pt) node[anchor=south east] {$C_i$};
    
    \draw [-, blue, very thick] (0, 0, -5) -- (-1, 0, -4);
    \draw [-, blue, very thick] (-1, 0, -4) --  (-1, 0, -3);
    \draw [-, blue, very thick] (-1, 0, -3) --  (-2, 0, -2);
    \draw [-, blue, very thick] (-2, 0, -2) --  (-3, 0, -1);
    \draw [-, blue, very thick] (-3, 0, -1) --  (-3, 0, 0);
    \draw [-, black, thick] (-3, 0, 0) -- (-2, 1, 0);
    \draw [-, black, thick] (-2, 1, 0) --  (-2, 2, 0);
    \draw [-, black, thick] (-2, 2, 0) --  (-1, 3, 0);
    \draw [->, black, thick] (-1, 3, 0) --  (0, 4, 0);
}

\newcommand{\pictureprojA}{
\begin{tikzpicture}[->,x=1cm,y=1cm,z=1cm,rotate around y=0,rotate around x=0, rotate around z=0, scale=0.5]
    \pictureprojcommon
    \draw [-, orange, thin] (0, 0, -5) -- (0,0,0);
    \draw [-, orange, thin] (0, 1, -4) -- (0,1,0);
    \draw [-, orange, thin] (0, 2, -4) -- (0,2,0);
    \draw [-, orange, thin] (0, 3, -2) -- (0,3,0);
    
    \draw [-, orange, thin] (0, 4, 0) -- (0, 0, 0);
    \draw [-, orange, thin] (0, 4, -1) -- (0,0,-1);
    \draw [-, orange, thin] (0, 3, -2) -- (0,0,-2);
    \draw [-, orange, thin] (0, 3, -3) -- (0,0,-3);
    \draw [-, orange, thin] (0, 1, -4) -- (0,0,-4);
    \draw [-, orange, dashed, very thick] (0, 4, -2) -- (0,4,-5);
    \filldraw[orange] (0, 4, -5) circle (2pt);
    \draw [-, orange, dashed, very thick] (0, 4, -5) -- (0,1,-5);
    \draw[fill=white, draw=white] (0, 1.5, -1.5) -- (0, 1.5, -2.5)
      -- (0, 2.5, -2.5) -- (0, 2.5, -1.5) -- cycle;
    \node at (0,2,-2) {\(\lambda^{(i)}\)};
\end{tikzpicture}
}

\newcommand{\pictureprojB}{
\begin{tikzpicture}[->,x=1cm,y=1cm,z=0.5cm,rotate around y=-80,rotate around x=0, scale=0.7]
    \pictureprojcommon
    \drawplaneX{-, orange, dashed, very thin}{0}{0}{4}{-5};
    \draw [-, orange, very thick]  (0, 4, 0) 
      -- (0, 4, -1)
      -- (0, 4, -2)
      -- (0, 3, -2)
      -- (0, 3, -3) % C
      -- (0, 3, -4)
      -- (0, 1, -4)
      -- (0, 1, -5)
      -- (0, 0, -5)
      ;
    \draw[<-, blue, dashed, thick] (-1,0,-4) -- (0, 1, -5);
    \draw[<-, blue, dashed, thick] (-1,0,-3) -- (0, 1, -4);
    \draw[<-, blue, dashed, thick] (-2,0,-2) -- (0, 2, -4);
    \draw[<-, blue, dashed, thick] (-3,0,-1) -- (0, 3, -4);
    \draw[<-, black, dashed, thick] (-3,0,0) -- (0, 3, -3);
    \draw[<-, black, dashed, thick] (-2,1,0) -- (0, 3, -2);
    \draw[<-, black, dashed, thick] (-2,2,0) -- (0, 4, -2);
    \draw[<-, black, dashed, thick] (-2,2,0) -- (0, 4, -2);
    \draw[<-, black, dashed, thick] (-1,3,0) -- (0, 4, -1);
    \draw[<-, black, dashed, thick] (-1,3,0) -- (0, 4, -1);
    \draw [->, gray, dashed, very thin] (-3,0,-1) -- (0, 0, -1);
    \draw [->, gray, dashed, very thin] (-2,0,-2) -- (0, 0, -2);
    \draw [->, gray, dashed, very thin] (-1,0,-3) -- (0, 0, -3);
    \draw [->, gray, dashed, very thin] (-1,0,-4) -- (0, 0, -4);
    \draw [->, gray, dashed, very thin] (0,0,-5) -- (0, 0, 0);
\end{tikzpicture}
}

\newcommand{\pictureonecommon} {
    \draw[->, dashed, very thin, gray] (xyz cs:x=-4) -- (xyz cs:x=3) node[above] {$x$};
    \draw[->, dashed, very thin, gray] (xyz cs:y=0) -- (xyz cs:y=5) node[left] {$y$};
    \draw[->, dashed, very thin, gray] (xyz cs:z=0) -- (xyz cs:z=-6) node[above] {$z$};
    \drawplaneZ{dashed, gray, very thin}{-4}{0}{4}{0};
    \filldraw[black] (0, 0, -5) circle (2pt) node[below] {$A$};
    \filldraw[black] (0, 4, 0) circle (2pt) node[anchor=east] {$B$};
    \draw [->, gray, dashed, very thin] (-3,0,-1) -- (0, 0, -1);
    \draw [->, gray, dashed, very thin] (-2,0,-2) -- (0, 0, -2);
    \draw [->, gray, dashed, very thin] (-1,0,-3) -- (0, 0, -3);
    \draw [->, gray, dashed, very thin] (-1,0,-4) -- (0, 0, -4);
    \draw [->, gray, dashed, very thin] (0,0,-5) -- (0, 0, 0);
}

\newcommand{\pictureoneslideA}{
\begin{tikzpicture}[->,x=1cm,y=1cm,z=0.5cm,rotate around y=-80,rotate around x=0, scale=0.6]
    \pictureonecommon
    \drawplaneX{-, orange, dashed, very thin}{0}{0}{4}{-5};
    \draw [-, orange, very thick]  (0, 4, 0) 
      -- (0, 4, -1)
      -- (0, 4, -2)
      -- (0, 3, -2)
      -- (0, 3, -3) % C
      -- (0, 3, -4)
      -- (0, 1, -4)
      -- (0, 1, -5)
      -- (0, 0, -5)
      ;
    \draw [-, blue, very thick] (0, 0, -5) -- (-1, 0, -4);
    \draw [-, blue, very thick] (-1, 0, -4) --  (-1, 0, -3);
    \draw [-, blue, very thick] (-1, 0, -3) --  (-2, 0, -2);
    \draw [-, blue, very thick] (-2, 0, -2) --  (-3, 0, -1);
    \draw [-, blue, very thick] (-3, 0, -1) --  (-3, 0, 0);
    \draw [-, black, thick] (-3, 0, 0) -- (-2, 1, 0);
    \draw [-, black, thick] (-2, 1, 0) --  (-2, 2, 0);
    \draw [-, black, thick] (-2, 2, 0) --  (-1, 3, 0);
    \draw [-, black, thick] (-1, 3, 0) --  (0, 4, 0);
    \node[color=black] at (0,5,-3) {$\lambda=(6,4,4,2)$};
\end{tikzpicture}
}
\newcommand{\pictureoneslideB}{
\begin{tikzpicture}[->,x=1cm,y=1cm,z=0.5cm,rotate around y=-80,rotate around x=0, scale=0.6]
    \pictureonecommon
    \drawplaneX{-, orange, dashed, very thin}{0}{0}{4}{-5};
    \draw [-, orange, very thick]  
         (0, 4, 0) 
      -- (0, 4, -1);
    \draw [-, blue, very thick] (0, 0, -5) -- (-1, 0, -4);
    \draw [-, blue, very thick] (-1, 0, -4) --  (-1, 0, -3);
    \draw [-, blue, very thick] (-1, 0, -3) --  (-2, 0, -2);
    \draw [-, blue, very thick] (-2, 0, -2) --  (-3, 0, -1);
    \draw [-, blue, dashed] (-3, 0, -1) --  (-3, 0, 0);
    \draw [-, black, thick] (-3, 0, -1) -- (-2, 1, -1);
    \draw [-, black, thick] (-2, 1, -1) --  (-2, 2, -1);
    \draw [-, black, thick] (-2, 2, -1) --  (-1, 3, -1);
    \draw [-, black, thick] (-1, 3, -1) --  (0, 4, -1);
\end{tikzpicture}
}

\newcommand{\pictureoneslideC}{
\begin{tikzpicture}[->,x=1cm,y=1cm,z=0.5cm,rotate around y=-80,rotate around x=0, scale=0.6]
    \pictureonecommon
    \drawplaneX{-, orange, dashed, very thin}{0}{0}{4}{-5};
    \draw [-, orange, very thick]  
         (0, 4, 0) 
      -- (0, 4, -1)
      -- (0, 3, -2);
    \draw [-, blue, very thick] (0, 0, -5) -- (-1, 0, -4);
    \draw [-, blue, very thick] (-1, 0, -4) --  (-1, 0, -3);
    \draw [-, blue, very thick] (-1, 0, -3) --  (-2, 0, -2);
    \draw [-, blue, dashed] (-2, 0, -2) --  (-3, 0, -1);
    \draw [-, blue, dashed] (-3, 0, -1) --  (-3, 0, 0);
    \draw [-, black, thick] (-2, 0, -2) -- (-1, 1, -2);
    \draw [-, black, thick] (-1, 1, -2) --  (-1, 2, -2);
    \draw [-, black, thick] (-1, 2, -2) --  (0, 3, -2);
\end{tikzpicture}
}
\newcommand{\pictureoneslideD}{
\begin{tikzpicture}[->,x=1cm,y=1cm,z=0.5cm,rotate around y=-80,rotate around x=0, scale=0.6]
    \pictureonecommon
    \drawplaneX{-, orange, dashed, very thin}{0}{0}{4}{-5};
    \draw [-, orange, very thick]  
         (0, 4, 0) 
      -- (0, 4, -1)
      -- (0, 3, -2)
      -- (0, 2, -3);
    \draw [-, blue, very thick] (0, 0, -5) -- (-1, 0, -4);
    \draw [-, blue, very thick] (-1, 0, -4) --  (-1, 0, -3);
    \draw [-, blue, dashed] (-1, 0, -3) --  (-2, 0, -2);
    \draw [-, blue, dashed] (-2, 0, -2) --  (-3, 0, -1);
    \draw [-, blue, dashed] (-3, 0, -1) --  (-3, 0, 0);
    \draw [-, black, thick] (-1, 0, -3) -- (0, 1, -3);
    \draw [-, orange, thick] (0, 1, -3) --  (0, 2, -3);
\end{tikzpicture}
}
\newcommand{\pictureoneslideE}{
\begin{tikzpicture}[->,x=1cm,y=1cm,z=0.5cm,rotate around y=-80,rotate around x=0, scale=0.6]
    \pictureonecommon
    \drawplaneX{-, orange, dashed, very thin}{0}{0}{4}{-5};
    \draw [-, orange, very thick]  
         (0, 4, 0) 
      -- (0, 4, -1)
      -- (0, 3, -2)
      -- (0, 2, -3)
      -- (0, 1, -3)
      -- (0, 1, -4);
    \draw [-, blue, very thick] (0, 0, -5) -- (-1, 0, -4);
    \draw [-, blue, dashed] (-1, 0, -4) --  (-1, 0, -3);
    \draw [-, blue, dashed] (-1, 0, -3) --  (-2, 0, -2);
    \draw [-, blue, dashed] (-2, 0, -2) --  (-3, 0, -1);
    \draw [-, blue, dashed] (-3, 0, -1) --  (-3, 0, 0);
    \draw [-, black, thick] (-1, 0, -4) -- (0, 1, -4);
\end{tikzpicture}
}
\newcommand{\pictureoneslideF}{
\begin{tikzpicture}[->,x=1cm,y=1cm,z=0.5cm,rotate around y=-80,rotate around x=0, scale=0.6]
    \pictureonecommon
    \drawplaneX{-, orange, dashed, very thin}{0}{0}{4}{-5};
    \draw [-, orange, very thick]  
         (0, 4, 0) 
      -- (0, 4, -1)
      -- (0, 3, -2)
      -- (0, 2, -3)
      -- (0, 1, -3)
      -- (0, 1, -4)
      -- (0, 0, -5);
    \draw [-, blue, dashed] (0, 0, -5) -- (-1, 0, -4);
    \draw [-, blue, dashed] (-1, 0, -4) --  (-1, 0, -3);
    \draw [-, blue, dashed] (-1, 0, -3) --  (-2, 0, -2);
    \draw [-, blue, dashed] (-2, 0, -2) --  (-3, 0, -1);
    \draw [-, blue, dashed] (-3, 0, -1) --  (-3, 0, 0);
    \node[color=black] at (0,5,-3.5) {$\Phi(\lambda)=(5,3,3,2)$};
\end{tikzpicture}
}

\newcommand{\picturenpath}{
\begin{tikzpicture}[x=1cm,y=1cm,z=0.5cm,rotate around y=30,rotate around x=0, rotate around z=0, scale = 0.7]\label{graph1}
    \draw[dashed, ->] (xyz cs:x=0) -- (xyz cs:x=5.5) node[above] {$x$};
    \draw[dashed, ->, very thin, lightgray] (xyz cs:y=0) -- (xyz cs:y=4.3) node[right] {$y$};
    \draw[->, dashed] (xyz cs:z=0) -- (xyz cs:z=-4) node[above] {$z$};
    \drawplaneZ{lightgray}{-1}{4}{4}{0}
    \showAllXZ{0}{4}{-4}{ultra thin, dashed, lightgray}
    \node[below] at (1,0,-3){${A_1}$};
    \node[below] at (2,0,-3){${A_2}$};
    \node[below] at (3,0,-3){${A_3}$};
    \node[below] at (4,0,-3){${A_4}$};
    \node[above] at (1,4,0){${B_1}$};
    \node[above] at (2,4,0){${B_2}$};
    \node[above] at (3,4,0){${B_3}$};
    \node[above] at (4,4,0){${B_4}$};
    \draw [blue, very thick] (1,0,-3)
        -- (0,0,-2) node[left, pos=0.4]{$z_3$}
        -- (-1,0,-1) node[left, pos=0.7]{$z_2$}
        -- (-1,0,0);
    \draw [black, very thick] (-1,0,0)
        -- (-1,1,0)
        -- (-1,2,0)
        -- (0,3,0) node[left, pos=0.7]{$x_3$}
        -- (1,4,0) node[left, pos=0.7]{$x_4$};
    \draw [blue, very thick] (2,0,-3)
        -- (1,0,-2) node[left, pos=0.7]{$z_3$}
        -- (1,0,-1) 
        -- (0,0,0) node[left, pos=0.7]{$z_1$};
    \draw [black, very thick] (0,0,0)
        -- (1,1,0) node[left, pos=0.7]{$x_1$}
        -- (1,2,0) 
        -- (2,3,0) node[left, pos=0.7]{$x_3$}
        -- (2,4,0);
    \draw [blue, very thick] (3,0,-3)
        -- (3,0,-2)
        -- (2,0,-1) node[left, pos=0.7]{$z_2$}
        -- (2,0,0);
    \draw [black, very thick] (2,0,0)
        -- (2,1,0)
        -- (2,2,0)
        -- (3,3,0) node[left, pos=0.7]{$x_3$}
        -- (3,4,0);
    \draw [blue, very thick] (4,0,-3) -- (4,0,0) node[above, midway]{$1$};
    \draw [black, very thick] (4,0,0) -- (4,4,0) node[left, midway]{$1$};
    \filldraw[black] (-1,0,0) circle (2pt);
    \filldraw[black] (0,0,0) circle (2pt);
    \filldraw[black] (2,0,0) circle (2pt);
    \filldraw[black] (4,0,0) circle (2pt);
  \end{tikzpicture}
}
\newcommand{\picturenpathy}{
  \begin{tikzpicture}[x=1cm,y=1cm,z=0.5cm,rotate around x=-63,rotate around z=-24, scale = 0.4]\label{graph2}    
    \draw[dashed, ->] (xyz cs:x=-1.5) -- (xyz cs:x=6) node[above] {$x$};
    \draw[->, dashed] (xyz cs:z=0) -- (xyz cs:z=-5) node[right] {$z$};
    \draw[->, dashed] (0,0,0) -- (0,0,5) node[left] {$y$};
    \showAllXZ{0}{5}{-4}{ultra thin, dashed, lightgray}
    \showAllXZ{0}{5}{3}{ultra thin, dashed, lightgray}
    \draw[<->, gray, thick] (4,0,-4) -- (0,0,-4) node[midway, below] {$c$};
    \draw[<->, gray, thick] (5,0,4) -- (5,0,0.1) node[midway, right] {$a$};
    \draw[<->, gray, thick] (5,0,-3) -- (5,0,-0.1) node[midway, right] {$b$};
    \draw[-, black, thick, dashed] (0, 0, -3)
        -- (4, 0, -3)
        -- (4, 0, 0);
    \filldraw[blue] (1,0,-3) circle (2pt) node[below, black] {${A_1}$};
    \filldraw[blue] (2,0,-3) circle (2pt) node[below, black] {${A_2}$};
    \filldraw[blue] (3,0,-3) circle (2pt) node[below, black] {${A_3}$};
    \filldraw[blue] (4,0,-3) circle (2pt) node[below, black] {${A_4}$};
    \filldraw[black] (1,4,4) circle (2pt) node[above] {${B_1}$};
    \filldraw[black] (2,4,4) circle (2pt) node[above] {${B_2}$};
    \filldraw[black] (3,4,4) circle (2pt) node[above] {${B_3}$};
    \filldraw[black] (4,4,4) circle (2pt) node[above] {${B_4}$};
    
    \draw [blue, very thick] (1,0,-3)
    -- (0,0,-2)
    -- (-1,0,-1)
    -- (-1,0,0);
    \draw [black, very thick] (-1,0,0)
    -- (-1,1,1)
    -- (-1,2,2)
    -- (0,3,3)
    -- (1,4,4);
    \draw [blue, very thick] (2,0,-3)
    -- (1,0,-2)
    -- (1,0,-1) 
    -- (0,0,0);
    \draw [black, very thick] (0,0,0)
    -- (1,1,1)
    -- (1,2,2) 
    -- (2,3,3)
    -- (2,4,4);
    \draw [blue, very thick] (3,0,-3)
    -- (3,0,-2)
    -- (2,0,-1)
    -- (2,0,0);
    \draw [black, very thick] (2,0,0)
    -- (2,1,1)
    -- (2,2,2)
    -- (3,3,3)
    -- (3,4,4);
    \draw [blue, very thick] (4,0,-3) -- (4,0,0);
    \draw [black, very thick] (4,0,0) -- (4,4,4);
    \filldraw[black] (-1,0,0) circle (2pt);
    \filldraw[black] (0,0,0) circle (2pt);
    \filldraw[black] (2,0,0) circle (2pt);
    \filldraw[black] (4,0,0) circle (2pt);
  \end{tikzpicture}
}

\newcommand{\showXZweak}[3]{ % 1-x, 2-z
    \foreach \pz in {#2, ..., -1} {
        \draw[->, #3] (#1,0,\pz) -- (#1, 0, \pz + 1);
        \draw[->, #3] (#1,0,\pz) -- (#1 - 1, 0, \pz + 1);
    }
}
\newcommand{\picturegraph}{
\begin{tikzpicture}[x=1cm,y=1cm,z=0.5cm,rotate around y=-45,rotate around x=-0, scale = 0.8]\label{graph3}

    \showXZweak{3}{-3}{very thin, lightgray, dashed}
    \showXZweak{2}{-3}{very thin, lightgray, dashed}
    \showXZweak{1}{-3}{very thin, lightgray, dashed}
   
    \drawplaneZ{dashed, lightgray, thin}{0}{3}{3}{0};
    \draw [->, black, very thick] (2,2,0) -- (2, 3, 0) node[left, pos=0.5]{$1$};
    \draw [->, black, very thick] (2,2,0) -- (3, 3, 0) node[right, pos=0.5]{$x_i$};
    
    \draw[->, dashed] (xyz cs:x=0) -- (xyz cs:x=4) node[above] {$x$};
    \draw[->, dashed] (xyz cs:y=0) -- (xyz cs:y=4) node[right] {$y$};
    \draw[->, dashed] (xyz cs:z=0) -- (xyz cs:z=-3.5) node[above] {$z$};

    \draw [->, blue, very thick] (3,0,-3) -- (3, 0, -2) node[right, pos=0.7]{$~1$};
    \draw [->, blue, very thick] (3,0,-3) -- (2, 0, -2)node[below, pos=0.5]{$z_i$};

    \end{tikzpicture}
}

\newcommand{\pictureslideA}{
\begin{tikzpicture}[x=1.2cm,y=1cm,z=0.5cm,rotate around y=33,rotate around x=0, scale=1.0]
    \draw[->, dashed, lightgray] (xyz cs:x=-2) -- (xyz cs:x=5.3) node[above] {$x$};
    \draw[->, dashed, lightgray] (xyz cs:y=0) -- (xyz cs:y=4.3) node[right] {$y$};
    \draw[->, dashed, lightgray] (xyz cs:z=0) -- (xyz cs:z=-3) node[above] {$z$};
    \showAllXZ{0}{4}{-3}{ultra thin, dashed, lightgray};
    \drawrectZ{gray, thin}{-1}{-1}{5}{0}{4}
    \drawrectZ{gray, thin}{-2}{-1}{5}{0}{4}
    \drawrectZ{orange, very thick}{0}{-1}{5}{0}{4};
    \filldraw[black] (2,4,0) circle (1pt) node[above] {$B_1$};
    \filldraw[black] (3,4,0) circle (1pt) node[above] {$B_2$};
    \filldraw[black] (4,4,0) circle (1pt) node[above] {$B_3$};
    \filldraw[black] (2,0,-2) circle (1pt) node[below] {$A_1$};
    \filldraw[black] (3,0,-2) circle (1pt) node[below] {$A_2$};
    \filldraw[black] (4,0,-2) circle (1pt) node[below] {$A_3$};
    \draw [->, blue, very thick] (2, 0, -2) 
        -- (1, 0, -1) node[pos=0.6, left]{$z_2$}
        -- (0, 0, 0) node[pos=0.6, left]{$z_1$};
    \draw [->, black, very thick] (0, 0, 0) 
        -- (0, 1, 0)
        -- (0, 2, 0) 
        -- (1, 3, 0) node[pos=0.6, left, black]{$x_3$}
        -- (2, 4, 0) node[pos=0.6, left, black]{$x_4$};
    \draw [->, blue, very thick] (3, 0, -2) 
        -- (2, 0, -1) node[pos=0.6, left]{$z_2$} 
        -- (2, 0, 0);
    \draw [->, black, very thick] (2, 0, 0)
        -- (2, 2, 0)
        -- (3, 3, 0) node[pos=0.6, left, black]{$x_3$}
        -- (3, 4, 0);
    \draw [->, blue, very thick] (4, 0, -2) 
        -- (4, 0, -1)
        -- (3, 0, 0) node[pos=0.6, left]{$z_1$};
    \draw [->, black, very thick] (3, 0, 0) 
        -- (3, 1, 0)
        -- (4, 2, 0) node[pos=0.6, left, black]{$x_2$} 
        -- (4, 4, 0);
\end{tikzpicture}
}

\newcommand{\pictureslideB}{
\begin{tikzpicture}[x=1.2cm,y=1cm,z=0.5cm,rotate around y=33,rotate around x=0]
    \draw[->, dashed, lightgray] (xyz cs:x=-2) -- (xyz cs:x=5.3) node[above] {$x$};
    \draw[->, dashed, lightgray] (xyz cs:y=0) -- (xyz cs:y=4.3) node[right] {$y$};
    \draw[->, dashed, lightgray] (xyz cs:z=0) -- (xyz cs:z=-3) node[above] {$z$};
    \showAllXZ{0}{4}{-3}{ultra thin, dashed, lightgray};
    \drawrectZ{orange, very thick}{-1}{-1}{5}{0}{4};
    \drawrectZ{gray, thin}{-2}{-1}{5}{0}{4};
    \drawrectZ{gray, thin}{0}{-1}{5}{0}{4};
    \filldraw[black] (2,4,0) circle (1pt) node[above] {$B_1$};
    \filldraw[black] (3,4,0) circle (1pt) node[above] {$B_2$};
    \filldraw[black] (4,4,0) circle (1pt) node[above] {$B_3$};
    \filldraw[black] (2,0,-2) circle (1pt) node[below] {$A_1$};
    \filldraw[black] (3,0,-2) circle (1pt) node[below] {$A_2$};
    \filldraw[black] (4,0,-2) circle (1pt) node[below] {$A_3$};
    \draw [->, blue, very thick] (2, 0, -2) 
        -- (1, 0, -1) node[pos=0.6, left]{$z_2$};
    \draw [->, black, very thick] (1, 0, -1) 
        -- (1, 1, -1)
        -- (1, 2, -1) 
        -- (2, 3, -1) node[pos=0.6, left, black]{$x_3$};
        \draw [->, orange, very thick] (2, 3, -1) 
        -- (2, 4, 0) node[pos=0.8, left, orange]{$x_4 z_1$};
    \draw [->, blue, very thick] (3, 0, -2) 
        -- (2, 0, -1) node[pos=0.6, left]{$z_2$};
    \draw [->, black, very thick] (2, 0, -1)
        -- (2, 2, -1)
        -- (3, 3, -1) node[pos=0.6, left, black]{$x_3$};
    \draw [->, orange, very thick] (3, 3, -1) -- (3, 3, 0) -- (3,4,0);
    \draw [->, blue, very thick] (4, 0, -2) 
        -- (4, 0, -1);
    \draw [->, black, very thick] (4, 0, -1) 
        -- (4, 1, -1);
    \draw [->, orange, very thick] (4, 1, -1) 
        -- (4, 2, 0) node[pos=0.6, left, orange]{$x_2 z_1$}
        -- (4, 4, 0);
    \end{tikzpicture}
}

\newcommand{\pictureslideC}{
\begin{tikzpicture}[x=1.2cm,y=1cm,z=0.5cm,rotate around y=33,rotate around x=0]
    \draw[->, dashed, lightgray] (xyz cs:x=-2) -- (xyz cs:x=5.3) node[above] {$x$};
    \draw[->, dashed, lightgray] (xyz cs:y=0) -- (xyz cs:y=4.3) node[right] {$y$};
    \draw[->, dashed, lightgray] (xyz cs:z=0) -- (xyz cs:z=-3) node[above] {$z$};
    \showAllXZ{0}{4}{-3}{ultra thin, dashed, lightgray};
    \drawrectZ{orange, very thick}{-2}{-1}{5}{0}{4}
    \drawrectZ{gray, thin}{-1}{-1}{5}{0}{4}
    \drawrectZ{gray, thin}{0}{-1}{5}{0}{4};
    \filldraw[black] (2,4,0) circle (1pt) node[above] {$B_1$};
    \filldraw[black] (3,4,0) circle (1pt) node[above] {$B_2$};
    \filldraw[black] (4,4,0) circle (1pt) node[above] {$B_3$};
    \filldraw[black] (2,0,-2) circle (1pt) node[below] {$A_1$};
    \filldraw[black] (3,0,-2) circle (1pt) node[below] {$A_2$};
    \filldraw[black] (4,0,-2) circle (1pt) node[below] {$A_3$};
    \draw [->, orange, very thick] (2, 0, -2) -- (2, 1, -2);
    \draw [->, orange, very thick] (2, 1, -2) -- (2, 2, -2);
    \draw [->, orange, very thick] (2, 2, -2) -- (2, 3, -1) node[pos=0.6, left, orange]{$x_4 z_2$};
    \draw [->, orange, very thick] (2, 3, -1) -- (2, 4, 0) node[pos=0.6, left, orange]{$x_4 z_1$};
    \draw [->, orange, very thick] (3, 0, -2) -- (3, 1, -2);
    \draw [->, orange, very thick] (3, 1, -2) -- (3, 2, -2);
    \draw [->, orange, very thick] (3, 2, -2) -- (3, 3, -1) node[pos=0.3, right, orange]{$x_3 z_2$};
    \draw [->, orange, very thick] (3, 3, -1) -- (3, 3, 0);
    \draw [->, orange, very thick] (3, 3, 0) -- (3, 4, 0);
    \draw [->, orange, very thick] (4, 0, -2) -- (4, 0, -1);
    \draw [->, orange, very thick] (4, 0, -1) -- (4, 1, -1);
    \draw [->, orange, very thick] (4, 1, -1) -- (4, 2, 0) node[pos=0.6, left, orange]{$x_2 z_1$};
    \draw [->, orange, very thick] (4, 2, 0) -- (4, 3, 0);
    \draw [->, orange, very thick] (4, 3, 0) -- (4, 4, 0);
    \end{tikzpicture}
}

\newcommand{\pictureslideD}{
  \begin{tikzpicture}[scale=0.5, x=1cm,y=1cm,z=0.3cm,
    rotate around x=-10,
    rotate around y=30,%-90
    rotate around z=-5 % 90
    ]
    \tiny
    \draw[->, gray, dashed] (xyz cs:x=3) -- (xyz cs:x=4) node[below] {$x$};
    \draw[->, gray, dashed] (xyz cs:y=4) -- (xyz cs:y=4.5) node[right] {$y$};
    \draw[->, gray, dashed] (xyz cs:z=-2) -- (xyz cs:z=-3) node[below] {$z$};
    \drawTop{(2,3,-1)}{-, gray, very thin, dashed};
    \drawLeft{(2,4,0)}{-, gray, very thin, dashed}; % empty
    \drawLeft{(3,4,0)}{-, gray, very thin, dashed}; % empty
    \drawLeft{(3,3,0)}{-, gray, very thin, dashed};
    \drawLeft{(3,2,-1)}{-, gray, very thin, dashed};
    \drawLeft{(2,2,-2)}{-, gray, very thin, dashed};
    \drawLeft{(2,1,-2)}{-, gray, very thin, dashed};
    \drawRight{(1,4,-1)}{-, gray, very thin, dashed}; \node at (1,3.5,-0.5) {\(1\)};
    \drawRight{(0,4,-2)}{-, gray, very thin, dashed}; \node at (0,3.5,-1.5) {\(0\)};
    \drawRight{(2,3,-1)}{-, gray, very thin, dashed}; \node at (2,2.5,-0.5) {$2$};
    \drawRight{(2,3,-2)}{-, gray, very thin, dashed}; \node at (2,2.5,-1.5) {$2$};
    \drawRight{(3,2,-1)}{-, gray, very thin, dashed}; \node at (3,1.5,-0.5) {$3$};
    \drawRight{(2,2,-2)}{-, gray, very thin, dashed}; \node at (2,1.5,-1.5) {$2$};
    \drawRight{(3,1,-1)}{-, gray, very thin, dashed}; \node at (3,0.5,-0.5) {$3$};
    \drawRight{(2,1,-2)}{-, gray, very thin, dashed}; \node at (2,0.5,-1.5) {$2$};
    \drawLeft{(1,2,-2)}{-, gray, very thin, dashed};
    \drawLeft{(1,1,-2)}{-, gray, very thin, dashed};
    \drawTop{(1,4,-1)}{-, black, thick, fill=cyan};
    \drawTop{(3,2,-1)}{-, black, thick, fill=cyan};
    \drawTop{(1,3,-2)}{-, black, thick, fill=cyan};
    \drawTop{(2,3,-2)}{-, black, thick, fill=cyan};
    \drawTop{(3,0,-2)}{gray, very thin, dashed};
    \drawLeft{(1,4,-1)}{-, black, thick, fill=orange};
    \drawLeft{(1,3,-2)}{-, black, thick, fill=orange};
    \drawLeft{(2,3,-2)}{-, black, thick, fill=orange};
    \drawLeft{(3,2,-1)}{-, black, thick, fill=orange};
    \drawLeft{(3,1,-1)}{gray, very thin, dashed};
\end{tikzpicture}
}

\title[On equidistribution theorem for plane partitions]{
On equidistribution theorem for plane partitions
} 

\author[Alimzhan Amanov \and Damir Yeliussizov]{Alimzhan Amanov \and Damir Yeliussizov}
\address{Kazakh-British Technical University, Almaty, Kazakhstan}
\email{\href{mailto:alimzhan.amanov@gmail.com}{alimzhan.amanov@gmail.com}}

\email{\href{mailto:yeldamir@gmail.com}{yeldamir@gmail.com}}

\subjclass[2020]{05A15, 05A17, 05A19, 05E05}
\keywords{Plane partitions, generating functions, equidistributed statistics, dual stable Grothendieck polynomials}
\begin{document}
\begin{abstract}
  We prove equidistribution of two pairs of statistics on boxed plane partitions: (volume, trace) and (corner-hook volume, number of corners). The proof relies on different 3d visualizations of the corresponding non-intersecting path systems. In particular, we obtain a new visual proof for a volume generating function of plane partitions. We also introduce a new statistic called the cohook area on ordinary partitions, and prove that it is equidistributed with the area of partitions. 
\end{abstract}

\maketitle

\section{Introduction}
Equidistributed statistics in combinatorics can be quite nontrivial and interesting. 
A remarkable example is MacMahon's theorem on equidistribution of the number of inversions and the major statistic on permutations \cite{mm}, which has a conceptual bijective proof due to Foata \cite{foata}, see also \cite[Prop.~1.4.6]{ec1}. 

In this paper, we study an equidistribution result of a similar kind but for statistics on plane partitions, which is a rather unusual instance compared to permutations. 

%\vspace{0.5em}

\subsection{Plane partitions} 
A \textit{plane partition} is a matrix $\pi = (\pi_{i,j})_{i,j \ge 1}$ of nonnegative integers with finitely many nonzero entries such that
\begin{align*}
  \pi_{i,j} \ge \pi_{i+1, j}, \qquad \pi_{i,j} \ge \pi_{i, j+1}, \qquad \text{for all }i, j \ge 1.
\end{align*}
A plane partition can be identified with its diagram 
$$D(\pi) := \{(i,j,k): 1\le k \le \pi_{i,j}\},$$ 
which can be visually represented as a pile of 3d boxes, see Figure~\ref{fig:corner-example}. We define the set of {\it corners} of $\pi$ as follows: 
$$\mathrm{Cor}(\pi) := \left\{(i,j,k) \in D(\pi) : (i+1, j,k), (i,j+1,k) \not\in D(\pi) \right\}.$$ 
There are two natural statistics on plane partitions: 
$$
\text{the {\it volume} $|\pi| := \sum_{i,j} \pi_{i,j} = |D(\pi)|$ and the {\it trace} $\mathrm{tr}(\pi) := \sum_i \pi_{i, i}$.}
$$
There are also two other statistics on plane partitions introduced in \cite{yel21a, yel21b}: 
$$
\text{the {\it corner-hook volume} $|\pi |_{ch} := \sum_{(i,j,k) \in \mathrm{Cor}(\pi)} (i+j - 1)$ and %the {\it number of corners} 
$\mathrm{cor}(\pi) := |\mathrm{Cor}(\pi)|$. } 
$$

The set of plane partitions whose diagram lies inside the box $[a]\times [b]\times [c]$ is denoted by $\mathrm{PP}(a,b,c)$, where we use the notation $[n]:= \{1, \ldots, n \}$. 
%$For such plane partitions 

We prove that the bivariate statistics  
(\textit{volume}, \textit{trace}) and (\textit{corner-hook volume}, \textit{number of corners}) are jointly equidistributed over boxed plane partitions.

\begin{theorem}[Equidistribution of (\textit{volume}, \textit{trace}) and (\textit{corner-hook volume}, \textit{corners})]\label{th:main-equi}
 We have: 
  %For any integer $a,b,c \ge 1$,
  $$
    \sum_{\pi \in \mathrm{PP}(a,b,c)} q^{|\pi|} t^{\mathrm{tr}(\pi)}
    =
    \sum_{\pi \in \mathrm{PP}(a,b,c)} q^{|\pi|_{ch}} t^{\cor(\pi)}.
  $$
Equivalently, for all $n,k$ we have:
$$
|\{\pi \in \mathrm{PP}(a,b,c) : |\pi| = n, \mathrm{tr}(\pi) = k \}| = |\{\pi \in \mathrm{PP}(a,b,c) : |\pi|_{ch} = n, \mathrm{cor}(\pi) = k \}|.
$$
\end{theorem}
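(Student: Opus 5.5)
Both sides will be computed as determinants coming from non-intersecting lattice paths (Lindström--Gessel--Viennot), and the two determinants will then be shown to agree. The pictures set up earlier suggest a specific encoding: $c$ paths starting at $A_k$ and ending at $B_k$, each made of a ``$z$-part'' in the $xz$-plane (steps with weights $1$ or $z_i$) followed by an ``$x$-part'' in the $xy$-plane (steps with weights $1$ or $x_i$).

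\emph{Step 1 (corner side).} Encode $\pi\in\mathrm{PP}(a,b,c)$ by the level sets $\lambda^{(k)}=\{(i,j):\pi_{i,j}\ge k\}$, $k=1,\dots,c$. These form a nested chain of partitions inside the $a\times b$ rectangle. The corners at height $k$ are exactly the corners of $\lambda^{(k)}$, and a corner $(i,j)$ contributes hook weight $i+j-1$. Each level boundary is a path in the picture, with its corner steps marked by weights $x_iz_j$. Specializing $x_i=q^{i}$, $z_j=tq^{j-1}$ (or similar) gives each corner weight $tq^{i+j-1}$. The generating function $\sum q^{|\pi|_{ch}}t^{\cor(\pi)}$ then becomes the LGV determinant for this system. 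Equivalently, by the cited work \cite{yel21a,yel21b}, it is a specialization of a dual stable Grothendieck polynomial $g_{(b^a)}$-type sum over the chain.

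\emph{Step 2 (volume/trace side).} Use the other 3d visualization, in which the same lattice paths are read as the boundaries of diagonal slices of $\pi$ (the classical Gansner/Stanley description). Here a step weighted $x_i$ or $z_j$ records a box on the $i$-th or $j$-th diagonal off the main diagonal, and boxes on the main diagonal are the ones carrying $t$. Under the analogous specialization the path weight is $q^{|\pi|}t^{\mathrm{tr}(\pi)}$. The cleanest way to make this identification precise is the sliding procedure illustrated in the figures. Sliding the $z$-part of the paths up into the $xy$-plane successively (the orange rectangles) converts a corner-weighted path system into a volume-weighted one. Each slide is a local weight-preserving bijection of the path families, so the total weight is preserved.

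\emph{Step 3.} Conclude that the two weighted sums enumerate the same non-intersecting path families with the same weights, hence are equal. Alternatively, one can show the two LGV matrices coincide entrywise: each entry is a single-path generating function, so it reduces to a one-path bijection (as in the $\lambda\mapsto\Phi(\lambda)$ pictures).

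The main obstacle is Step 2/3: checking that the sliding map is a bijection on \emph{non-intersecting} systems, and that it transports corner weights $tq^{i+j-1}$ exactly to volume and trace contributions, including the $t$ bookkeeping on the diagonal. My first attempt would be to verify the single-path version entrywise, since that bypasses the non-intersection issue entirely via LGV.
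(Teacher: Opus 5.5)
Your volume/trace side reaches the right weight, but you misdescribe the mechanism, and it needs no sliding. For a non-intersecting system in the floor/wall graph, project $P_k$ along $(1,1,1)$ onto the plane $x=k$; this gives the boundary of the level set $\lambda^{(k)}$. Each pair (floor left edge $z_b$, wall right edge $x_a$) on $P_k$ is one Frobenius hook of $\lambda^{(k)}$, with arm $a$ and leg $b$. Under $x_i=q^i$, $z_j=tq^{j-1}$ it contributes $tq^{a+b-1}$, so $w(P_k)=q^{|\lambda^{(k)}|}t^{d(\lambda^{(k)})}$, and $\sum_k d(\lambda^{(k)})=\mathrm{tr}(\pi)$. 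The steps do not record single boxes on off-diagonals.

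The genuine gap is on the corner side: you never produce a determinant equal to $\sum_{\pi}q^{|\pi|_{ch}}t^{\cor(\pi)}$. Step~1's claim that the corner-marked level boundaries form an LGV system fails for the natural encodings.
\begin{itemize}
\item If the paths are drawn in the separate planes $x=k$, as in the slide figures, they are automatically disjoint. LGV then gives a diagonal matrix and counts unnested $c$-tuples of partitions.
\item If they are drawn in one plane with each outer corner replaced by a diagonal step, that step skips the corner vertex. Vertex-disjointness then becomes weaker than containment. Moreover, forbidding the unreplaced corner pattern is not an edge-weight condition.
\end{itemize}
So there is no second LGV matrix to compare entrywise. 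The single-path bijection $\Phi$ settles only $c=1$. It cannot be applied level by level, because it does not respect containment: $(2,2)\supseteq(2,1)$, yet $\Phi(2,2)=(2,1)\not\supseteq(2,2)=\Phi(2,1)$.

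The slide bijection you fall back on is asserted rather than constructed, and it carries the entire content of your argument. The paper does not rely on it; it only mentions it in Remark~\ref{remark:direct}.

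The missing idea is to keep the single determinant $D(a,b,c)$ of the floor/wall graph and read the same non-intersecting systems a second way. Cut each $P_k$ at its first left floor edge $z_{\mu'_k}$. This writes $D(a,b,c)$ as a sum over $\mu\in\mathrm{P}(b,c)$ of determinants, each equal to $g_\mu(\mathbf{x}_a;\mathbf{z}_b)$ by the Jacobi--Trudi-type formula of Proposition~\ref{prop:gr-det}. Then $\sum_\mu g_\mu$ at $x_i=q^i$, $z_j=tq^{j-1}$ is, by definition, the corner-hook generating function. Note that it is summed over side shapes $\mu=\sh_1(\pi)$, not given by a single $g_{(b^a)}$.
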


%  $$
%    \sum_{\pi \in \mathrm{PP}(a,b,c)} q^{|\pi|} t^{|\pi|_{ch}}
%    =?
%    \sum_{\pi \in \mathrm{PP}(a,b,c)} q^{|\pi|_{ch}} t^{|\pi|}
%  $$

A weaker (as an infinite sum when $c \to \infty$) version of this theorem was established in \cite{yel21b} %(and generalized in higher dimensions in \cite{ay23}) 
by an indirect argument via maps to matrices. In this paper, we show that a stronger result holds for a finite boxed version, by a direct combinatorial argument. %, which is the main purpose of this paper.

\vspace{0.5em}

The following volume-trace generating function for plane partitions was obtained by Stanley \cite{sta73}:
$$
\sum_{\pi\in \mathrm{PP}(a,b, \infty)} q^{|\pi|} t^{\mathrm{tr}(\pi)} = \prod_{i=1}^{a}\prod_{j=1}^{b}(1 - t q^{i + j - 1})^{-1},%\frac{1}{}
$$
which generalizes the volume generating function (for $t = 1$) by MacMahon \cite{macmahon}. Note also that for $t = 1$ and a finite boxed version there is a nice product formula for the corresponding generating function 
  $$
    \sum_{\pi \in \mathrm{PP}(a,b,c)} q^{|\pi|} %t^{\mathrm{tr}(\pi)}
    = \prod_{i= 1}^a \prod_{j = 1}^b \prod_{k = 1}^c \frac{1 - q^{i + j + k -1}}{1 - q^{i+j+k - 2}}.
$$
%which no longer holds when we consider the bivariate volume-trace generating function. 
In the same series of works, MacMahon also conjectured volume generating functions for $d$-dimensional partitions which naturally generalize plane partitions in higher dimensions. Later, in \cite{abmm} his conjecture was shown to be incorrect %by computer calculations already in the case 
for all $d \ge 3$. 
In \cite{yel21a, yel21b} the second author introduced the corner-hook volume on plane partitions (named `up-hook volume' there up to a diagram rotation) and showed that it gives product formulas as in the volume generating function. 
We generalized this result to $d$-dimenisonal partitions in \cite{ay23}, and it turns out that the corner-hook volume is the correct statistic lying behind MacMahon's generating functions for general $d$. Theorem~\ref{th:main-equi} shows that  the two statistics are equidistributed for $d = 2$, which does not happen in higher dimensions. %why MacMahon made wrong assumption -- the statistics are equidistributed in $d = 2$.

\vspace{0.5em}

%\subsection{Schur and dual stable Grothendieck polynomials}
In this paper, we in fact show a direct combinatorial proof of a more general identity with weights relating two known families of symmetric functions (indexed by ordinary partitions): \textit{Schur polynomials} $s_{\lambda}$ and \textit{dual stable Grothedieck polynomials} $g_{\lambda}$ which can be viewed as $K$-theoretic extensions of Schur functions, see \textsection\,\ref{sg1}, \ref{sg2} for definitions and context. 
%By $\mathrm{P}(a,b)$ we denote the set of (ordinary) partitions whose Young diagram lies inside the box $[a] \times [b]$.   
%\begin{theorem}\label{th:main-gr}
%We have: 
%  \begin{align*}
%    \sum_{\lambda \in \mathrm{P}(\min(a,b), c) }s_{\lambda}(x_1, \ldots, x_{a}) s_{\lambda}(z_1, \ldots, {z}_{b})
%    =
%      \sum_{\lambda \in \mathrm{P}(b, c)} g_{\lambda}(x_1, \ldots, {x}_{a}; z_1,\ldots, {z}_{b}) 
%  \end{align*}
%\end{theorem}
It is obtained by using the Lindstr\"om--Gessel--Viennot lemma \cite{lin, lgv} and interpreting plane partitions as non-intersecting path systems in two different ways on the same picture, as in Figure~\ref{fig:picturebox}. 
We then apply it to double enumeration of plane partitions. %For our knowledge, known proofs use Known proofs, see for instance \cite{sta, sta2} uses celebrated RSK correspondence or its analogies. 

The proofs use ideas developed in \cite{ay22}.

\subsection{Ordinary partitions}
As a byproduct of our approach, we also introduce a new statistic $|\cdot|_c$ for ordinary partitions, which we call the \textit{cohook area} and show that it is equidistributed with the usual area $|\cdot|$ of partitions.

\begin{theorem}\label{th:main-partitions}
  We have:
  $$
    \sum_{\lambda \in \mathrm{P}(a,b)} q^{|\lambda|} t^{d(\lambda)} = \sum_{\lambda \in \mathrm{P}(a,b)} q^{|\lambda|_c} t^{\mathrm{cor}(\lambda)},
  $$
  where sum runs over all partitions $\lambda$ liyng inside a box $a \times b$, $d(\lambda)$ is the length of the Durfee square of $\lambda$, and $\mathrm{cor}(\lambda)$ is the number of corners of $\lambda$.
\end{theorem}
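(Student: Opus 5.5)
The plan is to obtain Theorem~\ref{th:main-partitions} as the specialization $c=1$ of Theorem~\ref{th:main-equi}, taking that theorem as given. The point is that boxed plane partitions of height at most one are the same thing as ordinary partitions in a box, and under this identification both pairs of statistics become the ones in the statement. I am assuming that the cohook area is defined as $|\lambda|_c=\sum_{(i,j)\in\mathrm{Cor}(\lambda)}(i+j-1)$, summed over the corner cells of the Young diagram; this is the two-dimensional analogue of the corner-hook volume.

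\textbf{Step 1: the identification.} A plane partition $\pi\in\mathrm{PP}(a,b,1)$ is a $0/1$ matrix, supported in $[a]\times[b]$, that is weakly decreasing along rows and columns. Its support $\{(i,j):\pi_{i,j}=1\}$ is therefore the Young diagram of a unique partition $\lambda\in\mathrm{P}(a,b)$. Conversely, every such $\lambda$ gives a plane partition by putting a $1$ in each cell of its diagram. This is a bijection $\mathrm{PP}(a,b,1)\to \mathrm{P}(a,b)$, and $D(\pi)=\{(i,j,1):(i,j)\in\lambda\}$.

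\textbf{Step 2: matching the statistics.} Under this bijection I check the four statistics one at a time.
\begin{itemize}
\item The volume is $|\pi|=|\lambda|$.
\item The trace is $\mathrm{tr}(\pi)=\#\{i:(i,i)\in\lambda\}$. Since $(i,i)\in\lambda$ forces $(r,r)\in\lambda$ for all $r\le i$, this number is the largest $r$ with $(r,r)\in\lambda$, which is the Durfee length $d(\lambda)$.
\item A triple $(i,j,1)\in D(\pi)$ is a corner exactly when $(i+1,j)\notin\lambda$ and $(i,j+1)\notin\lambda$. This is precisely the condition for $(i,j)$ to be a corner of $\lambda$, so $\cor(\pi)=\cor(\lambda)$.
\item Using the same correspondence of corners, $|\pi|_{ch}=\sum_{(i,j)\in\mathrm{Cor}(\lambda)}(i+j-1)=|\lambda|_c$.
\end{itemize}

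\textbf{Step 3: conclusion.} Substituting these identifications into Theorem~\ref{th:main-equi} with $c=1$ gives the claimed identity term by term.

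As a sanity check, for the $1\times 2$ box both sides equal $1+qt+q^2t$. Here $(2)$ contributes $q^2t$ to the left side through its area, and to the right side through its single corner $(1,2)$, which has weight $2$.

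\textbf{Main obstacle.} The only genuine work is in Theorem~\ref{th:main-equi} itself. If one wanted a self-contained proof of the $c=1$ case instead, I would run the non-intersecting path argument in one plane. The boundary of $\lambda$ is a single lattice path. Weighting its steps so that the path records $|\lambda|$ and $d(\lambda)$ gives one generating function. Reading the same path by its outer corners (peaks) at positions $(i,j)$, each with weight $tq^{i+j-1}$, gives the other. One would then show that both sides equal $\sum_{k}t^k q^{k}\binom{a}{k}_q\binom{b}{k}_q$ up to the appropriate $q$-shift; on the Durfee side this comes from the standard Durfee-square decomposition.

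The delicate point in that self-contained route is the corner side. One needs a bijection from partitions with $k$ corners to pairs of $k$-subsets, $\{i_1<\dots<i_k\}\subseteq[a]$ and $\{j_1>\dots>j_k\}\subseteq[b]$. The sum $\sum_r(i_r+j_r-1)$ must then be shown to have the same $q$-distribution as the Durfee side. I expect a direct $q$-binomial computation to verify this, but in the paper it is subsumed by Theorem~\ref{th:main-equi}.
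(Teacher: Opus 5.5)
Your proof is correct, but it follows a different route from the paper. You obtain the statement as the height-one slice $c=1$ of Theorem~\ref{th:main-equi}. A plane partition in $\mathrm{PP}(a,b,1)$ is the indicator of a diagram $\lambda\in\mathrm{P}(a,b)$, and your four identifications all hold: volume $=|\lambda|$, trace $=d(\lambda)$, corners of $\pi$ $=$ corners of $\lambda$, and corner-hook volume $=|\lambda|_c$. There is no circularity, since the paper proves Theorem~\ref{th:main-equi} in Section~4 without the partition result.

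The paper instead gives a direct, elementary bijection. Write $\lambda=(\alpha_1,\ldots,\alpha_k\mid\beta_1,\ldots,\beta_k)$ in its modified Frobenius notation, where arms and legs include the diagonal cell, so $\alpha_r\le b$ and $\beta_r\le a$. The paper sets $\Phi(\lambda)=[\beta_1,\ldots,\beta_k\mid\alpha_k,\ldots,\alpha_1]$ in corner notation, i.e.\ corners at $(\beta_r,\alpha_{k+1-r})$. Then
\[
|\Phi(\lambda)|_c=\sum_r(\alpha_r+\beta_r-1)=|\lambda|,\qquad \mathrm{cor}(\Phi(\lambda))=k=d(\lambda),
\]
and the box is visibly preserved.

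Your reduction is a two-line corollary and makes explicit that the partition theorem is literally the $c=1$ case of the plane-partition theorem. However, it inherits the whole LGV/Schur/dual-Grothendieck machinery, including Proposition~\ref{prop:gr-det}, and it yields only an equality of generating functions. The paper's $\Phi$ is self-contained and explicit, and it gives the unboxed identity $p(n,k)=q(n,k)$ for free. Remark~\ref{remark:path} explains that $\Phi$ is exactly what the single-path picture produces, so the two routes are two views of the same object.

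Your fallback sketch is essentially the paper's $\Phi$ in generating-function form: Frobenius coordinates and corner coordinates are both just a pair of $k$-subsets of $[a]$ and $[b]$. The shift you left open is $q^{k^2}$, not $q^{k}$. The Durfee decomposition gives $q^{k^2}\binom{a}{k}_q\binom{b}{k}_q$. On the corner side, $\sum_{S\subseteq[a],\,|S|=k}q^{\sum S}=q^{\binom{k+1}{2}}\binom{a}{k}_q$, so the total is $q^{-k}\,q^{2\binom{k+1}{2}}\binom{a}{k}_q\binom{b}{k}_q=q^{k^2}\binom{a}{k}_q\binom{b}{k}_q$. This matches the Durfee side, so that route would also have worked without invoking Theorem~\ref{th:main-equi}.
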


\section{Preliminaries}
\subsection{Partitions}
A {\it partition} is a sequence $\lambda = (\lambda_1,\ldots,\lambda_\ell)$ of positive integers $\lambda_1 \ge \ldots \ge \lambda_\ell$, where $\ell(\lambda) = \ell$ is the {\it length} of $\lambda$. Denote $|\lambda| = \sum_{i} \lambda_i$ the {\it size} or {\it area} of $\lambda$.
Every partition $\lambda$ can be represented as the {\it Young diagram} $D(\lambda) := \{(i, j): i \in [1, \ell], j \in [1,\lambda_i], (i, j) \in \mathbb{N}^2 \}$.
By $\lambda'$ we denote the \textit{conjugate partition} of $\lambda$, i.e. transpose of its diagram. For a cell $(i,j) \in D(\lambda)$ the values $\lambda_i - i + 1$ and $\lambda'_j - j + 1$ are called the {\it arm} and {\it leg} lengths of $(i,j)$. Together, they constitute $(i,j)$ hook length $h_{\lambda}(i,j) := (\lambda_i - i) + (\lambda'_j - j) + 1$.

A \textit{Durfee square side} $d(\lambda)$ of partition $\lambda$ is the largest $k$ for which $\lambda_k \ge k$. The Frobenius notation $(a_1,\ldots,a_k | b_1 \ldots, b_k)$ of a partition $\lambda$ with Durfee square side equal to $k:= d(\lambda)$ expresses the partition in terms of its hooks for $(i,i) \in D(\lambda)$, namely\footnote{We slightly deviate from the standard definition by including the box $(i,i)$ to the arm and the leg.} \textit{the arm} $a_i = |\{(i,i),\ldots,(i,\lambda_i)\}|$ and \textit{the leg} $b_i = |\{(i,i),\ldots,(\lambda'_i, i)\}|$. Clearly, $a_1 > \ldots > a_k \ge 1$ and $b_1 > \ldots > b_k \ge 1$ and $\sum_{i=1}^k (a_i + b_i - 1) = |\lambda|$ is the partition area.

The set of partitions whose Young diagram lies inside the box $[a] \times [b]$ is denoted by $\mathrm{P}(a,b)$.

\subsection{Plane partitions}
% A \textit{plane partition} is a matrix $\pi = (\pi_{i,j})_{i,j \ge 1}$ of nonnegative integers with finitely many nonzero entries such that
% \begin{align*}
%   \pi_{i,j} \ge \pi_{i+1, j}, \qquad \pi_{i,j} \ge \pi_{i, j+1}, \qquad \text{for all }i, j \ge 1.
% \end{align*}
% Every plane partition $\pi$ can be represented as the diagram 
% $$D(\pi) := \{(i,j,k): 1\le k \le \pi_{i,j}\},$$ 
% which can be visually represented as a pile of 3d boxes, see Figure~\ref{fig:corner-example}. 
We refer the reader to the introduction section for the definition of a plane partitions.
The {\it shape} of $\pi$ is given by $\mathrm{sh}(\pi) = \{(i,j): \pi_{i,j} > 0\}$. By the \textit{side shape} $\mathrm{sh}_1(\pi)$ we denote the partition given by its first row $(\pi_{1,i})$.

Recall, that the box or the element $(i,j,k) \in D(\pi)$ is called \textit{a corner}, if
$$ (i+1,j,k) \not\in D(\pi) \quad\text{and}\quad (i,j+1,k) \not \in D(\pi).$$
If additionally $(i,j,k+1) \not\in D(\pi)$ we call it \textit{top corner}.
The set of all corners is denoted by $\Cor(\pi)$ and its number $\cor(\pi) = |\Cor(\pi)|$.
% Alternatively, 
% $$\mathrm{Cor}(\pi) = \{(i, j, k): \max(\pi_{i+1,j},\pi_{i,j+1}) < k \le \pi_{i,j}\}.$$
% The \textit{volume} $|\pi|$ and the \textit{corner-hook volume} $|\pi|_{ch}$ of $\pi$ are defined as follows:
% $$
%   |\pi| = \sum_{i,j\ge 1} \pi_{i,j}, \qquad |\pi|_{ch} = \sum_{(i,j,k) \in \Cor(\pi)} (i + j - 1).
% $$
% The \textit{trace} of $\pi$ is $\mathrm{tr}(\pi) := \sum_{i \ge 1} \pi_{i,i}$, i.e. the sum of diagonal entries.

The set of plane partitions whose diagram lies inside the box $[a]\times [b]\times [c]$ is denoted by $\mathrm{PP}(a,b,c)$
We regard plane partition $\pi \in \mathrm{PP}(a,b,c)$ as a pile of boxes.

\vspace{0.5em}
For example, consider the plane partition $\pi$ of shape $\sh(\pi)=(3,3,2,1)$ and the side shape $\sh_1(\pi) = (3,3,1)$ given by
\begin{align}\label{eq:pp}
    \pi = \begin{pmatrix}
        3 &3 &1\\
        3 &2 &1\\
        3 &2 &0\\
        1 &0 &0
      \end{pmatrix} \in \mathrm{PP}(4, 3, 3),
\end{align} 
which is displayed in Figure~\ref{fig:corner-example}. It has $\mathrm{tr}(\pi) = 5$, $|\pi| = 19$ and $|\pi|_{ch} = 2 + 4 + 3 + 4 + 4 + 4 = 21$, where
\begin{align*}
  \Cor(\pi) &= \{ (1,2,3), (2,3,1), (3,1,3), (3,2,2), (3,2,1), (4,1,1) \},
  \qquad
  \cor(\pi) = 6.%,\\
  %|\pi|_{ch} &= 4 + (5 + 4 + 4 + 3) + (4 + 4 + 3 + 3) = 35.
\end{align*}

\begin{remark}
    The definitions of corners follow conventions in \cite{ay23}. It is equivalent to the notion of `descents' in \cite{yel21a, yel21b}. 
\end{remark}

\begin{figure}%[h]
    \centering
    % \picturebox
    \picturepp
    \picturecorner
    \caption{The plane partition $\pi$ from \eqref{eq:pp} represented as a pile of boxes. We present it in a rotated way, so that the side shape $\sh_1(\pi) = (3,3,1)$ appears at the bottom with rows corresponding to $y$-direction, columns to $z$-direction and height to $x$-direction (compared to Fig.~\ref{fig:picturebox}). The numbers on the right sides of boxes represent the heights $\pi_{i,j}$. }
  \label{fig:corner-example}
  \end{figure}

\subsection{Schur polynomials}\label{sg1}
We denote $\mathbf{x}_n = (x_1, \ldots, x_n)$ (and similarly for other sets of variables). 
The Schur polynomials $\{s_{\lambda}\}$ can be defined as follows:
$$
  s_{\lambda}(\mathbf{x}_n) := \sum_{\pi \in \mathrm{SPP}(\lambda)} \prod_{(i,j) \in D(\lambda)} x_{\pi_{i,j}}
$$
where $\mathrm{SPP}(\lambda)$ is the set of \textit{column-strict plane partitions} of shape $\lambda$, i.e. filling of a diagram $D(\pi)$ with entries $1,\ldots,n$ so that the rows are weakly decreasing and columns are strictly decreasing. They satisfy the following determinantal formula.

\begin{proposition}[N\"agelsbach–Kostka or dual Jacobi--Trudi identity]\label{prop:schur}
    For $n \ge 1$ and $\ell(\lambda) \le n$ we have:
    \begin{align}
        \label{eq:schur}
        s_{\lambda}(\mathbf{x}_n) = \det\left( e_{\lambda'_i - i + j}(\mathbf{x}_n) \right)_{i,j \in [\lambda_1]},
    \end{align}
    where $e_k(\mathbf{x}) = \sum_{i_1 < \ldots < i_k} x_{i_1} \cdots x_{i_k} = s_{(1^k)}(\mathbf{x})$ is the $k$-th elementary symmetric function. 
\end{proposition}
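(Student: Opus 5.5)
Since the statement is the classical dual Jacobi--Trudi identity, I will prove it with the Lindstr\"om--Gessel--Viennot lemma, which is also the tool used throughout the paper. The idea is to realize $e_k(\mathbf{x}_n)$ as a weighted count of lattice paths. Then the determinant becomes a signed sum over path families, and only non-intersecting families survive; these will be matched bijectively with column-strict fillings counted by $s_\lambda(\mathbf{x}_n)$.

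\emph{Step 1: the lattice.} Take the directed acyclic graph on $\mathbb{Z}\times[0,n]$ whose edges go from $(u,r-1)$ to $(u,r)$ with weight $1$, and from $(u,r-1)$ to $(u+1,r)$ with weight $x_r$. A path from $(p,0)$ to $(p+k,n)$ chooses which $k$ of the $n$ levels carry a diagonal step. Its weight is therefore $\prod_{r\in S}x_r$ for a $k$-subset $S\subseteq[n]$, so the total weight of such paths is $e_k(\mathbf{x}_n)$. This is zero when $k<0$ or $k>n$, consistent with the convention for $e_k$.

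\emph{Step 2: sources, sinks and the determinant.} Set $m=\lambda_1$, $A_j=(-j,0)$ and $B_i=(\lambda'_i-i,n)$ for $i,j\in[m]$. The total weight of paths $A_j\to B_i$ is $e_{\lambda'_i-i+j}(\mathbf{x}_n)$, which is exactly the $(i,j)$ entry in \eqref{eq:schur}. Both sequences are strictly decreasing in the horizontal coordinate: $-j$ decreases in $j$, and $\lambda'_i-i$ decreases in $i$. In this planar graph, any path family realizing a non-identity pairing must contain two crossing paths. LGV then gives that the determinant equals the weighted sum over non-intersecting families $(P_1,\dots,P_m)$ with $P_i\colon A_i\to B_i$.

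\emph{Step 3: the bijection, which is the main obstacle.} Path $P_i$ has exactly $\lambda'_i$ diagonal steps, at levels $r_1<\dots<r_{\lambda'_i}$. I fill column $i$ of $D(\lambda)$ with these levels, placing them in decreasing order from top to bottom; this makes each column strictly decreasing. It remains to show that non-intersection of $P_i$ and $P_{i+1}$ is equivalent to the rows weakly decreasing between columns $i$ and $i+1$. The path $P_{i+1}$ starts one unit to the left of $P_i$. I will compare the horizontal positions of the two paths after each level. Non-intersection means the position of $P_{i+1}$ stays strictly less than that of $P_i$. This should translate into: for every $t$, the $t$-th largest diagonal level of $P_{i+1}$ is at most the $t$-th largest of $P_i$. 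That is exactly the row condition for the reverse-ordered filling used in $\mathrm{SPP}(\lambda)$. I expect getting the orientation right here to be the delicate part: whether the decreasing convention forces reading the levels as $n+1-r$, and whether the comparison is with the largest or the smallest levels.

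Once the order is fixed, the weight of the family is $\prod_{(i,j)\in D(\lambda)}x_{\pi_{i,j}}$, and the bijection is clearly invertible. This yields \eqref{eq:schur}. The hypothesis $\ell(\lambda)=\lambda'_1\le n$ guarantees that every column has length at most $n$, so the paths fit in the strip.
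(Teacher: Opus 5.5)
The paper does not prove this proposition; it quotes it as the classical N\"agelsbach--Kostka identity, so your argument has to stand on its own. Steps 1 and 2 are a correct LGV setup. Step 3, however, carries all the content, and it rests on a claim that is false as you state it.

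Let $N_i(r)$ be the number of diagonal steps of $P_i$ at levels $\le r$. After level $r$ the path $P_i$ is at horizontal position $-i+N_i(r)$. Every path meets each level exactly once and moves right by at most one unit per level. Hence the family is non-intersecting iff $N_{i+1}(r)\le N_i(r)$ for all $i$ and $r$. This is equivalent to: for $t\le\lambda'_{i+1}$, the $t$-th \emph{smallest} level of $P_{i+1}$ is at least the $t$-th smallest level of $P_i$. The two level sets are aligned at their smallest elements. Because $\lambda'_{i+1}$ may be less than $\lambda'_i$, this is not equivalent to comparing $t$-th largest levels.

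For instance, take $n=3$ and $\lambda=(2,1)$.
\begin{itemize}
\item Give $P_1$ diagonal levels $\{1,2\}$ and $P_2$ diagonal level $\{3\}$. The paths are disjoint: their vertices are $(-1,0),(0,1),(1,2),(1,3)$ and $(-2,0),(-2,1),(-2,2),(-1,3)$. Yet your filling has first row $(2,3)$, which is not weakly decreasing.
\item Conversely, levels $\{2,3\}$ and $\{1\}$ produce the valid filling with first row $(3,1)$, but the two paths meet at $(-1,1)$.
\end{itemize}
So your map neither lands in $\mathrm{SPP}(\lambda)$ nor covers it, and \eqref{eq:schur} is not yet proved.

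The repair is the first option you hesitated over: record $n+1-r$ rather than $r$. Give the diagonal edge at level $r$ the weight $x_{n+1-r}$. Step 1 still yields $e_k(\mathbf{x}_n)$, because $r\mapsto n+1-r$ permutes the $k$-subsets of $[n]$. Then put $n+1-(\text{$t$-th smallest level of }P_i)$ in row $t$ of column $i$. With this filling:
\begin{itemize}
\item columns strictly decrease;
\item the criterion above becomes exactly weak decrease along rows;
\item the weight of the family equals the weight $\prod x_{\pi_{s,u}}$ of the filling in the definition of $s_\lambda$.
\end{itemize}
This gives the invertible weight-preserving bijection you wanted. Your sentence about $t$-th largest entries is correct for the recorded values $n+1-r$; it fails only for the raw levels $r$.

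Alternatively, keep the weights $x_r$ and put the $t$-th smallest level in row $t$. This gives an ordinary semistandard tableau, so LGV shows the determinant equals $\sum_T \mathbf{x}^T$ over such tableaux. You can then pass to the paper's decreasing convention via $r\mapsto n+1-r$ and the symmetry of $s_\lambda$. That symmetry follows here from the determinant being a polynomial in the $e_k$.
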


\subsection{Dual stable Grothendieck polynomials}\label{sg2}
The refined dual stable Grothendieck polynomials $\{g_{\lambda}(\mathbf{x}_a; \mathbf{z}_b)\}$ in two sets of variables $\mathbf{x}_a = (x_1,\ldots,x_a)$ and $\mathbf{z}_b = (z_1,\ldots,z_b)$, indexed by partitions $\lambda$, are defined as follows:
$$
  g_{\lambda}(\mathbf{x}_a; \mathbf{z}_b) := \sum_{\pi:~\sh_1(\pi)=\lambda}~\sum_{(i,j,k) \in \Cor(\pi)} x_{i} z_{j}
$$
where the sum runs over plane partitions $\pi \in \mathrm{PP}(a,b,c)$ of side shape $\sh_1(\pi) = \lambda \subseteq [b] \times [c]$. 

The dual stable Grothedieck polynomials $g_{\lambda}(\mathbf{x}) = g_{\lambda}(\mathbf{x}; \mathbf{z})|_{z_i \to 1}$ were introduced in \cite{lp} as a $K$-theoretic analogue of Schur functions. Their refined version $\tilde{g}(\mathbf{x}; \mathbf{z}) = \mathbf{z}^{\lambda}g(\mathbf{x}; \mathbf{z}^{-1})$ was introduced in \cite{ggl}. %and generalized version $\{\overline{g}_{\lambda}(\mathbf{x}, \mathbf{z}) = g_{\lambda}(\mathbf{z}, \mathbf{x})\}$ introduced in \cite{yel21b}.
They satisfy the following determinantal formula, which is equivalent to the one proved in \cite{yel17}. %\cite{ay22, kim22} for refined version with skew shape.
\begin{proposition}\label{prop:gr-det}
    %Let $a,b,c \ge 1$. 
    For any partition $\lambda$ with $\lambda_1 \le a$, we have:
    $$
        g_{\lambda}(\mathbf{x}_a; \mathbf{z}_b) = \det\left(
            z_{\lambda'_i}\sum_{k \ge 0} e_{j-i+k}(\mathbf{x}_a) e_{k}(\mathbf{z}_{\lambda'_i-1})
            \right)_{i,j \in [\lambda_1]}.
    $$
\end{proposition}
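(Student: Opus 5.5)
The plan is to prove Proposition~\ref{prop:gr-det} by the Lindstr\"om--Gessel--Viennot lemma, using the two-plane lattice path picture of Figure~\ref{fig:picturebox}. I read the definition of $g_\lambda$ as the sum over $\pi$ of the \emph{product} $\prod_{(i,j,k)\in\Cor(\pi)} x_i z_j$, since that is what the determinant produces.

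\textbf{The lattice.} I would set up a directed graph made of two pieces glued along the $x$-axis.
\begin{itemize}
\item In the $xz$-plane, a step from level $z=-m$ to level $z=-m+1$ is either straight, with weight $1$, or moves one unit left in $x$, with weight $z_m$.
\item In the $xy$-plane, a step from level $y=r-1$ to level $y=r$ is either straight, with weight $1$, or moves one unit right in $x$, with weight $x_{a+1-r}$ (or $x_r$, depending on orientation).
\end{itemize}
For each column index $i\in[\lambda_1]$ of the side shape, I place a source $A_i$ at $(i,0,-\lambda'_i)$ and a sink $B_j$ at $(j,a,0)$. The index conventions will be fixed so that each row of $\pi$, read through its column heights, becomes one path.

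\textbf{Step 1: the one-path weight.} I would show that the total weight of paths $A_i\to B_j$ equals the $(i,j)$ entry of the matrix.
\begin{itemize}
\item The $z$-part starts at level $-\lambda'_i$. I will arrange that its first step is forced to be diagonal, contributing the factor $z_{\lambda'_i}$; this forced step encodes that row $\lambda'_i$ is the last nonempty one.
\item The remaining $\lambda'_i-1$ steps choose $k$ diagonal steps among $z_1,\dots,z_{\lambda'_i-1}$, contributing $e_k(\mathbf z_{\lambda'_i-1})$.
\item The path then reaches the $x$-axis at $x=i-1-k$ and must climb to $x=j$. This forces $j-i+k+1$ right-diagonal steps, or $j-i+k$ after the normalisation of the forced step. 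Summing over their positions gives $e_{j-i+k}(\mathbf x_a)$.
\end{itemize}
Summing over $k\ge 0$ gives the entry. The off-by-one bookkeeping here needs care but is routine.

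\textbf{Step 2: the bijection.} The heart of the proof is a weight-preserving bijection between plane partitions $\pi$ with $\sh_1(\pi)=\lambda$ and non-intersecting path families $(A_i\to B_i)_{i\in[\lambda_1]}$. This follows the pictures in Figures~\ref{fig:picturebox} and~\ref{fig:corner-example}.
\begin{itemize}
\item Slicing the pile of boxes by the second coordinate $j$ gives, for each $j$, a partition (a column of $\pi$). Its boundary, read in the rotated coordinates, is the $j$-th path.
\item The $xz$-portion records how the side shape $\lambda$ is traced.
\item The $xy$-portion records the heights $\pi_{r,j}$.
\item Diagonal steps occur exactly where the boundary turns a corner, that is, at a box $(r,s,k)$ with $(r+1,s,k),(r,s+1,k)\notin D(\pi)$. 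The step's weights are precisely the $x_r$ and $z_s$ of that corner.
\end{itemize}
I would then check two things. First, the plane partition inequalities between consecutive slices translate exactly into the condition that consecutive paths share no vertex. Second, the box constraint $\pi\in\mathrm{PP}(a,b,c)$ matches the fixed endpoints.

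\textbf{Step 3: applying LGV.} The graph is acyclic. After projecting the two half-planes onto one plane by unfolding along the $x$-axis, the graph is planar, with the $A_i$ and the $B_j$ each ordered compatibly along the boundary. Hence the only permutation admitting non-intersecting families is the identity, and the LGV lemma gives $\det(\cdot)_{i,j\in[\lambda_1]}=g_\lambda(\mathbf x_a;\mathbf z_b)$.

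\textbf{Main obstacle.} I expect Step 2 to be the hard part: the exact identification of diagonal steps with corners, with the correct indices $x_r z_s$, and the proof that non-intersection is equivalent to the column-wise inequalities $\pi_{r,s}\ge\pi_{r,s+1}$. My first approach is to argue slice by slice, as in the sliding pictures of Figures~\ref{fig:picturebox}--\ref{fig:corner-example}. Comparing the $s$-th and $(s+1)$-st paths level by level, a shared vertex should occur exactly when some $\pi_{r,s}<\pi_{r,s+1}$.
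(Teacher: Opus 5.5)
The LGV framework of your Steps 1 and 3 is the right one; it is the set-up used in the proof of Lemma~\ref{lm:main-gr}. However, the count in Step 1 must not be ``normalised''. After the forced first step the path is at $x=i-1$, so with $k$ further left steps it needs $j-i+k+1$ right steps, and that is the correct answer: the statement as printed is off by one and is false. For $\lambda=(1)$ the relevant plane partitions are columns of $r\le a$ ones, each with the single corner $(r,1,1)$, so $g_{(1)}=z_1e_1(\mathbf{x}_a)$, whereas the printed $1\times 1$ determinant equals $z_1$. More generally, every corner contributes one $x$ and one $z$, while every monomial of the printed determinant has $z$-degree exceeding its $x$-degree by $\lambda_1$. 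The same slip occurs in the proof of Lemma~\ref{lm:main-gr}, where $A'_i=(i-1,0,\mu'_i-1)$. What you should prove is the determinant with entries $z_{\lambda'_i}\sum_{k\ge 0}e_{j-i+k+1}(\mathbf{x}_a)e_k(\mathbf{z}_{\lambda'_i-1})$.

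The genuine gap is Step 2. Slicing by the second coordinate cannot work, for three reasons. It produces $\ell(\lambda)$ slices rather than $\lambda_1$. All corners in the $j$-th slice carry the same $z_j$, which is incompatible with the weight $z_{\lambda'_i}e_k(\mathbf{z}_{\lambda'_i-1})$ of a single path. And the corner condition $(r,s+1,k)\notin D(\pi)$ refers to the neighbouring slice. The only slicing consistent with Step 1 is by height: path $i$ encodes $\rho^{(i)}=\{(r,s):\pi_{r,s}\ge i\}$, whose first row has length $\lambda'_i$ and whose corners are exactly the corners of $\pi$ at height $i$.

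Even then, containment is not equivalent to non-intersection. Path $i$ makes one left step per corner of $\rho^{(i)}$, so it meets the $x$-axis at $x=i-\cor(\rho^{(i)})$. Non-intersection therefore forces $\cor(\rho^{(i+1)})\le\cor(\rho^{(i)})$, which containment does not imply. Take $a=b=2$ and $\pi=\begin{pmatrix}2&2\\2&1\end{pmatrix}$, so $\lambda=(2,2)$.
\begin{itemize}
\item $\rho^{(1)}$ is the $2\times 2$ square, with one corner, and $\rho^{(2)}=(2,1)$ has two, so both paths pass through the origin.
\item Conversely, with weight $x_j$ on wall level $j$ as in the paper, the unique non-intersecting family of the same weight $x_1x_2^2z_1z_2^2$ has two left steps on the first path and one on the second. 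It decodes to $\rho^{(1)}=(2,1)\subsetneq\rho^{(2)}=(2,2)$.
\end{itemize}
The corrected identity does hold in this case: both sides equal $z_2^2(x_1^2+x_1x_2+x_2^2+z_1x_1^2x_2+z_1x_1x_2^2+z_1^2x_1^2x_2^2)$. So the weight-preserving correspondence exists, but it mixes slices and is not slice-by-slice.

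What is missing is a non-local bijection between non-intersecting families and plane partitions of side shape $\lambda$, or an independent argument. Such a bijection is what the slide (jeu de taquin) operations mentioned in Remark~\ref{remark:direct} provide. The paper itself gives no proof of this proposition; it quotes the formula from \cite{yel17}.
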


\section{Paths enumeration}%{Proof of theorem \ref{th:main-gr}}
\label{sec:main-th}

\subsection{Graph construction}\label{subsec:graph}
\newcommand{\coloredvector}[2]{%
  \tikz[baseline=0.5ex]{%
    \draw[->, thick, #2] (0,0) -- #1;
  }%
}
Let $a,b,c \in \mathbb{N}$ be fixed. 
Define a directed acyclic weighted graph $G$ %= (\mathbb{Z}^2 \cup \mathbb{Z}^2, E, w) 
with vertices on the lattice $\mathbb{Z}^3$ (in $X,Y,Z$ coordinates) as follows:
\begin{itemize}[leftmargin=20pt]
  \item The vertices are lattice points on the plane $XY$ ({\it wall}) and the plane $XZ$ ({\it floor}).
  \item The edges $\{e\}$ are of four types, for $i \in \mathbb{Z}$:
    \subitem $\left(\coloredvector{(0.5,0.5)}{blue}\right)$ floor {\it forward} edges: $(i,0,j) \to (i,0,j-1)$ of weight $w(e) = 1$, for $j \in [1, b]$;
    \subitem$\left(\ \ \coloredvector{(0,0.5)}{blue}\ \ \right)$ floor {\it left} edges: $(i,0,j) \to (i-1,0,j-1)$ of weight $w(e) = z_j$, for $j \in [1, b]$;
    \subitem $\left(\ \ \coloredvector{(0,0.5)}{black}\ \ \right)$ wall {\it upward} edges: $(i,j-1,0) \to (i,j,0)$ of weight $w(e) = 1$, for $j \in [1, a]$;
    \subitem $\left(\coloredvector{(0.5,0.5)}{black}\right)$ wall {\it right} edges: $(i-1,j-1,0) \to (i,j,0)$ of weight $w(e) = x_j$,  for $j \in [1, a]$.
\end{itemize}
The directions in the edge list are specified according to Figure~\ref{fig:picturebox} and  Figure~\ref{fig:graphexample}.
We also put $c$ sources $\mathbf{A} = (A_1,\ldots,A_c)$ and $c$ sinks $\mathbf{B} = (B_1,\ldots,B_c)$ for $i \in [c]$:
$$
  A_i = (i, 0, b), \qquad B_i = (i, a, 0),
$$
i.e. each path $P: A_i \to B_j$ first `crawls' on the floor and then `climbs' up by the wall. Note also that $\#\{\text{left steps}\} - \#\{\text{right steps}\} = i - j$, with an equal number of left and right steps if $i = j$.
\begin{figure}%[ht]
    \centering
    % \pictureboxmain
    \resizebox{0.589\textwidth}{!}{
        \pictureboxmainshift
    }
    \caption{This picture can be observed in three ways: 
    (a) boxed plane partition in $\mathrm{PP}(a,b,c)$, with dashed lines; (b) the paths $Q_i: A_i \to B_i$, where $i$-th one travels in the plane $x = i$, these paths define the dashed plane partition (see Theorem~\ref{th:main-cross}); 
    (c) the paths $P_i: A_i \to B_i$, each going first by the `floor' ($y = 0$ plane) and then by the `wall' ($z = 0$ plane). The orange lines represent the volume enumeration of (a) with edges of (b).}
    \label{fig:picturebox}
  \end{figure}
\begin{figure}%[h]
    \centering
    {\scriptsize
    \picturegraph
    \picturenpath
    \picturenpathy    
    }
    \caption{(a) A part of the lattice with typical sample steps. (b) An example of a path system for $a = 4$, $b = 3$, $c = 4$ with the path weights given by $w(P_1) = z_3 z_2 x_3 x_4$, $w(P_2) = z_3 z_1 x_1 x_3$, $w(P_3) = z_2 x_3$, $w(P_4) = 1$. 
    (c) A planar view of the graph $G$. }
    \label{fig:graphexample}
\end{figure}

\subsection{Enumerators}
Define weighted path enumerator in the usual way:
$$
    w(X \to Y) = \sum_{P: X \to Y} \prod_{e \in P} w(e)
$$
over all paths $P$ in the lattice $G$ from the point $X \in G$ to the point $Y \in G$ with steps $e$ given as above. The weight of a path is a product of all edge weights.
The following formula is then clear from the construction.

\begin{lemma}
    We have: 
    $$w(A_i \to B_j) = \sum_{k \ge 0} e_{k}(\mathbf{z}_b) e_{j - i + k}(\mathbf{x}_a).$$
\end{lemma}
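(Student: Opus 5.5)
Every path from $A_i=(i,0,b)$ to $B_j=(j,a,0)$ has the same shape: a floor segment from $A_i$ down to a point $(m,0,0)$ on the line $y=z=0$, followed by a wall segment from $(m,0,0)$ up to $B_j$. The floor and wall share only the vertices of this line, and floor edges strictly decrease $z$ while wall edges strictly increase $y$. So every path passes through exactly one vertex $(m,0,0)$, and the path is uniquely the concatenation of a floor path $A_i\to(m,0,0)$ and a wall path $(m,0,0)\to B_j$. Path weights are multiplicative, so the plan is to write
$$w(A_i\to B_j)=\sum_{m\in\mathbb{Z}} w\bigl(A_i\to(m,0,0)\bigr)\,w\bigl((m,0,0)\to B_j\bigr)$$
and compute the two factors separately.

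\emph{Floor factor.} A floor path from $(i,0,b)$ takes exactly $b$ steps, one leaving each level $z=j$ for $j=b,b-1,\dots,1$. At level $j$ it chooses either a forward step (weight $1$) or a left step (weight $z_j$, decreasing $x$ by one). The path is therefore determined by the subset $S\subseteq[b]$ of levels where it steps left. It ends at $(i-|S|,0,0)$ and has weight $\prod_{j\in S}z_j$. Summing over $S$ with $|S|=k$ gives
$$w\bigl(A_i\to(i-k,0,0)\bigr)=e_k(\mathbf{z}_b).$$

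\emph{Wall factor.} Similarly, a wall path from $(m,0,0)$ to $(j,a,0)$ takes exactly $a$ steps, one entering each level $y=\ell$ for $\ell\in[a]$. It is determined by the subset $T\subseteq[a]$ of levels reached by a right step (weight $x_\ell$), and it must satisfy $|T|=j-m$. Hence
$$w\bigl((m,0,0)\to B_j\bigr)=e_{j-m}(\mathbf{x}_a),$$
with the convention $e_r=0$ for $r<0$ or $r>a$.

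Substituting $m=i-k$ into the convolution gives $\sum_{k\ge0}e_k(\mathbf{z}_b)\,e_{j-i+k}(\mathbf{x}_a)$, as claimed. The only point needing care is the first step: checking that the decomposition at the unique crossing point $(m,0,0)$ is a bijection. This follows from the edge list, since no edge leaves the line $y=z=0$ back onto the floor and no wall edge lies in the floor. The remaining steps are routine.
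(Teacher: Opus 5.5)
Your proof is correct and follows the paper's argument. Like the paper, you split each path at its unique crossing point $(i-k,0,0)$ with the $x$-axis, so the floor part contributes $e_k(\mathbf{z}_b)$ (a choice of $k$ left steps among the $b$ floor levels) and the wall part contributes $e_{j-i+k}(\mathbf{x}_a)$ (a choice of $j-i+k$ right steps among the $a$ wall levels); your write-up is in fact slightly more careful than the paper's, which misstates the crossing point as $(i,0,i-k)$ and swaps $a$ and $b$ when counting the available steps.
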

\begin{proof}
    To reach $B_j$ starting from $A_i$ via the path $P$, the path latter needs to have the shift $j - i$ by the coordinate $x$. If the path $P$ has $k \ge 0$ extra left steps, then the shift becomes $j - i + k$. Let $P \cap \{x\text{-axis}\} = C = (i, 0, i - k)$. Then
    $$
        w(A_i \to C) = e_{k}(\mathbf{z}_b), \qquad w(C \to B_j) = e_{j - i + k}(\mathbf{x}_a).
    $$
    Indeed, part of $P$ from $A_i$ to $C$ requires a choice of $k$ left steps out of $a$ possible, similarly from $C$ to $B_j$ requires $j-i+k$ right steps out of $b$ available.
\end{proof}

Similarly, define the signed weighted multi-enumerators
$$
    w(\mathbf{A} \to \mathbf{B}) := \sum_{\mathbf{P}: N^\pm(\mathbf{A}, \mathbf{B})} \sgn(\mathbf{P}) w(\mathbf{P})
$$
where $N^{\pm}(\mathbf{A}, \mathbf{B})$ is the set of all path systems $\mathbf{P} = (P_1, \ldots, P_c)$ from $\mathbf{A}$ to $\mathbf{B}$, such that $P_i: A_i \to B_{\sigma(i)}$ for some permutation $\sigma \in S_c$ (the twist) and $\sgn(\mathbf{P}) = \sgn(\sigma)$. 
Also let $N(\mathbf{A}, \mathbf{B})$ be the subset of nonintersecting path systems (i.e. $c$ paths with
no common vertices as in Fig.~\ref{fig:graphexample}(b)).
Set
$$
    D(a, b, c) := \det\left( \sum_{k \ge 0} e_{k}(\mathbf{z}_b) e_{j-i+k}(\mathbf{x}_a) \right)_{i,j \in [c]} = w(\mathbf{A} \to \mathbf{B}).
$$
\begin{lemma}%[LGV lemma]
\label{lemma:lgv}
    We have:
    \begin{align}\label{eq:det}
        D(a,b,c) = \sum_{\mathbf{P} \in N(\mathbf{A}, \mathbf{B})} w(\mathbf{P})
    \end{align}
    where $\mathbf{P} = (P_1,\ldots,P_c)$ with $P_i: A_i \to B_i$.
\end{lemma}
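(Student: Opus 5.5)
This is an instance of the Lindstr\"om--Gessel--Viennot lemma applied to the graph $G$ of \textsection\ref{subsec:graph}. The plan has two parts: verify the identity $D(a,b,c)=w(\mathbf{A}\to\mathbf{B})$ stated above, and then show that the signed sum over path systems reduces to the nonintersecting systems with identity twist.

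For the first part, expand the determinant as $\sum_{\sigma\in S_c}\sgn(\sigma)\prod_i w(A_i\to B_{\sigma(i)})$. By the previous lemma each factor equals the $(i,\sigma(i))$ entry. Since $G$ is acyclic and every path uses finitely many edges, with floor steps indexed by $j\in[1,b]$ and wall steps by $j\in[1,a]$, each $w(A_i\to B_j)$ is a polynomial. The product therefore expands as a sum over tuples of paths, which gives $w(\mathbf{A}\to\mathbf{B})$.

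Next comes the standard sign-reversing involution on intersecting systems. Given $\mathbf{P}$ with some common vertex, take the smallest index $i$ whose path meets another path. Let $v$ be the first vertex of $P_i$ shared with another path, and let $j$ be the smallest index $j\neq i$ with $v\in P_j$. Swap the tails of $P_i$ and $P_j$ after $v$. The weight is preserved, because weights live on edges and the multiset of edges is unchanged. The sign flips, because $\sigma$ is composed with a transposition. The map is an involution: the set of intersection vertices and the index set of paths through them are unchanged by the swap, so the same $(i,v,j)$ is selected again. Hence all intersecting systems cancel, and $D(a,b,c)$ equals the signed sum over $N^{\pm}(\mathbf{A},\mathbf{B})\cap\{\text{nonintersecting}\}$.

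The main obstacle is showing that a nonintersecting system must have $\sigma=\mathrm{id}$, since $G$ is drawn in three dimensions. The plan is to use the planar picture of Figure~\ref{fig:graphexample}(c): unfold the floor and the wall along the $x$-axis into a single plane, sending $(x,0,z)\mapsto(x,-z)$ and $(x,y,0)\mapsto(x,y)$. Under this map every edge becomes a straight segment, namely a vertical unit step or a diagonal step. The sources lie on the line $-b$ in increasing $x$ order, and the sinks lie on the line $a$ in increasing $x$ order. Every path is monotone in the second coordinate. Two edges between consecutive levels cannot cross without sharing a lattice point, because the only possible crossing would be between a diagonal step and a vertical step in the same unit cell, and there is no such configuration with disjoint endpoints when steps are only $(0,1)$ and $(\mp1,1)$. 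So if paths are vertex-disjoint, their relative left-to-right order is preserved at every level. Hence $A_i$ must be joined to $B_i$, so $\sigma=\mathrm{id}$ and the sign is $+1$. This yields \eqref{eq:det}.
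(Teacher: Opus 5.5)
Your proposal is correct and follows the paper's route: apply Lindstr\"om--Gessel--Viennot to $G$, then use planarity to force $\sigma=\mathrm{id}$. Your unfolding matches the planar view in Figure~\ref{fig:graphexample}(c), and you write out the tail-swapping involution and the order-preservation step that the paper leaves to the citation.

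One small correction to your crossing argument. A diagonal step and a vertical step in the same unit cell always share an endpoint, so they never cross. The real reason no crossing occurs is that the two diagonal directions never appear in the same strip: $(-1,1)$ steps lie between levels $-b$ and $0$, and $(1,1)$ steps lie between levels $0$ and $a$.
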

\begin{proof}
    The proof follows by applying the Lindstr\"om--Gessel--Viennot lemma \cite{lin, lgv} for the graph $G$ (defined in Subsection \ref{subsec:graph}) with sources $\mathbf{A}$ and sinks $\mathbf{B}$. Since the graph is in fact planar, the twist $\sigma = \mathrm{id}$ for each $\mathbf{P} \in N(\mathbf{A}, \mathbf{B})$.
\end{proof}

%We are now ready to prove main theorem.

\begin{lemma}\label{lm:main-gr}
%\begin{theorem}[= Theorem~\ref{th:main-gr}]
 We have: 
  %Let $a,b,c\in\mathbb{N}$. Then
  \begin{align*}
    \sum_{\lambda \in \mathrm{P}(\min(a,b), c)}s_{\lambda}(\mathbf{x}_{a})s_{\lambda}(\mathbf{z}_{b})
    =
      \sum_{\mu \in \mathrm{P}(b, c)} g_{\mu}(\mathbf{x}_{a}; \mathbf{z}_{b}) 
  \end{align*}
%\end{theorem}
\end{lemma}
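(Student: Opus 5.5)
The plan is to show that both sides of Lemma~\ref{lm:main-gr} equal $D(a,b,c)$, using Lemma~\ref{lemma:lgv}. We read the nonintersecting systems $\mathbf{P}=(P_1,\ldots,P_c)\in N(\mathbf{A},\mathbf{B})$ in two ways: by where the paths cross the $x$-axis (the floor/wall split, giving Schur functions), and by slicing (giving the $g_\mu$).

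\emph{Schur side.} Each $P_i$ meets the line $y=z=0$ in a unique point $C_i=(i-k_i,0,0)$. Here $k_i$ is the number of left steps of $P_i$, which equals its number of right steps. Nonintersection forces $C_1,\ldots,C_c$ to be distinct and increasing, so $i-k_i$ is strictly increasing. Hence $\lambda'_{c+1-i}:=k_i$, suitably reindexed, gives a partition whose conjugate has at most $c$ parts; that is, $\lambda_1\le c$. Since $k_i\le b$ and $k_i\le a$, we get $\lambda\in \mathrm{P}(\min(a,b),c)$.

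Conversely, fix the crossing points. The floor parts are then nonintersecting systems $A_i\to C_i$ in a planar lattice, and the wall parts are nonintersecting systems $C_i\to B_i$. Any floor system and any wall system glue to a nonintersecting system, because the two planes share only the $x$-axis and the $C_i$ are distinct. So $N(\mathbf{A},\mathbf{B})$ splits as a disjoint union over $\lambda$ of a product. By Lemma~\ref{lemma:lgv} applied to each half separately, the floor factor is $\det(e_{\lambda'_i-i+j}(\mathbf{z}_b))$ and the wall factor is $\det(e_{\lambda'_i-i+j}(\mathbf{x}_a))$. By Proposition~\ref{prop:schur}, these are $s_\lambda(\mathbf{z}_b)$ and $s_\lambda(\mathbf{x}_a)$. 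The required check is that the sources/sinks $C_i$ and the shifts match the indexing $\lambda'_i-i+j$ of \eqref{eq:schur}; this is routine bookkeeping, up to reversing the order of the rows.

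\emph{Grothendieck side.} The cleanest route is to follow Figure~\ref{fig:picturebox}(b). Each path $P_i$ is traded for a path $Q_i$ in the plane $x=i$. The nonintersecting system then becomes the boundary data of a plane partition $\pi\in\mathrm{PP}(a,b,c)$, and $\mu=\sh_1(\pi)\in \mathrm{P}(b,c)$. The diagonal right/left steps of $P_i$ correspond to corners of $\pi$: a right step of weight $x_j$ paired with a left step of weight $z_{j'}$ contributes a factor $x_j z_{j'}$. This reproduces the corner weights in the definition of $g_\mu$.

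To avoid a fragile bijection, I would instead argue algebraically. Group the systems by the data determined by the floor endpoints and the unpaired steps. Then use Proposition~\ref{prop:gr-det} together with a Cauchy--Binet-type expansion of $D(a,b,c)$ over the choice of the last left step $z_{\lambda'_i}$. This should show that $D(a,b,c)=\sum_{\mu} g_\mu$. Concretely, the last left step of each path determines $\mu'_i$; the weight $z_{\mu'_i}$ is factored out; and the remaining floor segment contributes $e_k(\mathbf{z}_{\mu'_i-1})$, which matches the entries of Proposition~\ref{prop:gr-det}. Paths with no left step contribute trivially, corresponding to zero parts of $\mu$.

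\emph{Main obstacle.} The hardest part is the Grothendieck side. One must check that decomposing by the position of the last left step gives a determinant with exactly the entries of Proposition~\ref{prop:gr-det}, including the boundary cases where $\mu$ has fewer than $c$ columns. One must also check that no cancellation issues arise in the expansion. I would first verify this on small cases such as $c=1$, where it reduces to $\sum_k e_k(\mathbf{z}_b)e_k(\mathbf{x}_a)=\sum_{m} z_m\sum_k e_k(\mathbf{z}_{m-1})e_{k+1}(\mathbf{x}_a)+1$, before proving the general determinantal expansion.
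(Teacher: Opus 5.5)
Your Schur half is the paper's argument: split each $P_i$ at its crossing $C_i$ with the $x$-axis, apply LGV separately to the floor and wall systems, and invoke Proposition~\ref{prop:schur}. One small fix: since $i-k_i$ is strictly increasing, $k_1\ge\dots\ge k_c$, so set $\lambda'_i:=k_i$ directly. Your reversal $\lambda'_{c+1-i}:=k_i$ would give an increasing sequence.

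On the Grothendieck half you reach the paper's decomposition. You condition on the left step of largest index $\mu'_i$, factor out $z_{\mu'_i}$, and aim for $D(a,b,c)=\sum_{\mu'}\det\bigl(z_{\mu'_i}\,w(A'_i\to B_j)\bigr)$ with $A'_i=(i-1,0,\mu'_i-1)$. That step is the \emph{first} left step after leaving $A_i=(i,0,b)$, not the last; your remainder $e_k(\mathbf z_{\mu'_i-1})$ already presupposes this. But the step that justifies this identity is exactly the one you leave as the ``main obstacle'', and the route you propose for it fails as stated. Expanding the determinant algebraically (row multilinearity or Cauchy--Binet) by the first left step in each row gives a term for \emph{every} tuple $(m_1,\dots,m_c)\in\{0,\dots,b\}^c$, where $m_i=0$ means no left step. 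The non-partition tuples do not vanish individually. For $b=c=2$, the tuple $(0,2)$ contributes $z_1z_2\bigl(e_2(\mathbf x_a)-e_1(\mathbf x_a)^2\bigr)\ne0$, which only the tuple $(1,2)$ cancels. So cancellation does occur on your route, and controlling it amounts to reproving LGV.

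The missing idea is to argue as on your Schur side, inside the positive sum over $N(\mathbf A,\mathbf B)$ from Lemma~\ref{lemma:lgv}.
\begin{itemize}
\item \emph{Partition.} If $P_{i+1}$ first turns left at level $m\ge1$, it lands on $(i,0,m-1)$. Path $P_i$ visits that point unless it already turned left at a level $\ge m$. So nonintersection forces $\mu'_1\ge\dots\ge\mu'_c$, i.e.\ $\mu\in\mathrm{P}(b,c)$.
\item \emph{Bijection.} Fix $\mu'$ and take $\mu'_i\ge1$. The initial segment of $P_i$ is forced and lies on the line $x=i$ at heights $\ge\mu'_i$. The tail of $P_j$ has $x\le j-1<i$ if $j\le i$, and floor height $\le\mu'_j-1<\mu'_i$ if $j>i$. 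So these systems biject with nonintersecting systems $\mathbf A'\to\mathbf B$.
\item \emph{Second LGV.} Only the identity twist occurs. For $i<i'$, $A'_i$ lies at height $\mu'_i-1\ge\mu'_{i'}-1$, so when the path from $A'_{i'}$ starts, the path from $A'_i$ is already at $x\le i-1<i'-1$. The left-to-right order then persists. This gives $\det\bigl(z_{\mu'_i}w(A'_i\to B_j)\bigr)$.
\item \emph{Boundary case.} Paths with $\mu'_i=0$ are forced to be straight, of weight $1$. Using the source $(i,0,0)$ for them gives rows $e_{j-i}(\mathbf x_a)$. These form a unitriangular block, reducing the determinant to size $\mu_1$.
\end{itemize}

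Two further remarks. First, $w(A'_i\to B_j)=\sum_k e_k(\mathbf z_{\mu'_i-1})\,e_{j-i+1+k}(\mathbf x_a)$, with the $+1$ you correctly have in your $c=1$ check. Proposition~\ref{prop:gr-det} must be read with this shift; as printed it would give $g_{(1)}=z_1$ rather than $z_1e_1(\mathbf x_a)$.

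Second, drop the paragraph about projecting to the plane $x=i$. Under that projection the paired steps $z_{b_j},x_{a_j}$ are the Frobenius hooks of the slice $\lambda^{(i)}$. That is the volume/trace reading used in Theorem~\ref{th:main-cross}, not the corner weights of $g_\mu$. Corners arise only through the slide bijection of Remark~\ref{remark:direct}.
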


\begin{proof}
    By Lemma~\ref{lemma:lgv}, non-intersecting path systems from $\mathbf{A}$ to $\mathbf{B}$ are enumerated by the determinant $D(a, b, c)$. Let us enumerate these path systems in two ways. %, interpreting both sides as non-intersecting paths systems to Figure~\ref{fig:picturebox}.
    
    On the one hand, let $\mathbf{P} \in N(\mathbf{A}, \mathbf{B})$ be a non-intersecting path system. Let $\mathbf{C}=(C_1,\ldots,C_c)$ be intersection points of paths $P_i$ with $OX$ line for $i \in [c]$. Then $\mathbf{C}$ split the path system $\mathbf{P}$ into two path systems $\mathbf{P}_z: \mathbf{A} \to \mathbf{C}$ (floor) and $\mathbf{P}_x: \mathbf{C} \to \mathbf{B}$ (wall). Let $C_i = (i-\lambda'_i, 0, 0)$ for some vector $\lambda' = (\lambda'_1,\ldots\lambda'_c)$. Since $\mathbf{P} \in N(\mathbf{A},\mathbf{B})$, the vector $\lambda'$ is a partition. Iterating over $\mathbf{C}$, by Proposition~\ref{prop:schur} we may decompose the determinant as follows: 
    \begin{align*}
        D(a, b, c) 
        = 
        &\sum_{\mathbf{C}} \det\left(w(A_i \to C_j)\right)
        \det\left(w(C_i \to B_j)\right)
        =\\
        &\sum_{\lambda} \det\left(
            e_{\lambda'_i + j - i}(\mathbf{z}_b)
            \right)
            \det\left(
                e_{\lambda'_i + j - i}(\mathbf{x}_a)
                \right)
                = \sum_{\lambda} s_{\lambda}(\mathbf{z}_b) s_{\lambda}(\mathbf{x}_a),
    \end{align*}
    where the sum runs over partitions $\lambda$ with $D(\lambda) \subseteq [\min(a,b)] \times [c]$. Indeed, $\lambda_1 \le c$ since there are $c$ paths corresponding to columns of $D(\lambda)$ diagram, and $\ell(\lambda) \le \min(a,b)$ since the terms $s_{\lambda}(\mathbf{x}_a)s_{\lambda}(\mathbf{z}_b)$ vanish otherwise.

    On the other hand, for each $i \in [c]$, let $\mu'_i$ be the maximal index for which the path $P_i$ has $z_{\mu'_i}$ in its weight $w(P_i)$; in other words, index of the first left edge of $P_i$. If there is no such edge, we set $\mu'_i = 0$. Since the paths are non-intersecting, $\mu' = (\mu'_1, \ldots, \mu'_c)$ is a partition satisfying $D(\mu) \subseteq [b] \times [c]$. Indeed, it has at most $c$ columns (as there are $c$ paths) and each $\mu'_i \le b$ by construction. 

    Denote by $A'_i = (i-1, 0, \mu'_i-1)$ the point of $P_i$ after passing this edge for $i \in [\mu_1]$. Summing up over all possible $\mathbf{A'} = (A'_1, \ldots, A'_c)$ (or equivalently, over $\mu'$) and using Proposition~\ref{prop:gr-det} we get:
    \begin{align*}
        D(a, b, c) 
        = &\sum_{\mathbf{A}'} \det\left(
            z_{\mu'_i} w(A'_i \to B_j)
            \right)\\
            = &\sum_{\mu'} \det\left(
                z_{\mu'_i} \sum_{k \ge 0} e_k(\mathbf{z}_{\mu'_i-1}) e_{j - i + k}(\mathbf{x_a}) 
                \right)
                = \sum_{\mu'} g_{\mu}(\mathbf{x}_a; \mathbf{z}_b).
    \end{align*}
    This completes the proof.
\end{proof}

\begin{remark}
This identity was shown in \cite{ms} by a probabilistic argument using the connection of dual stable Grothendieck polynomials with last passage percolation model from \cite{yel-lpp} and the Schur measure.
\end{remark}

\begin{remark}
    %Denote $\mathbb{N}(a, b)$ to be set of $\mathbb{N}$-matrices of size $a \times b$. 
    We now have a complete `triangle' of formulas and relations between them (for $c \to \infty$):
    \begin{itemize}
        \item The RSK correspondence (see e.g. \cite[Ch.~7]{sta}) shows  
        $$\sum_{\ell(\lambda) \le \min(a,b)}s_{\lambda}(\mathbf{x}_a) s_{\lambda}(\mathbf{z}_b)
        = \prod_{i=1}^a \prod_{j=1}^b \frac{1}{1 - x_i z_j}$$
        
        \item The bijective map $\Phi: \mathrm{PP}(a,b,\infty) \to \{(a_{i,j})_{i \in [a], j \in [b]} : a_{i,j} \in \mathbb{N}\}$ 
        (see \cite{yel21a, yel21b, ay23}) shows
        $$\sum_{\ell(\lambda) \le b} g_{\lambda}(\mathbf{x}_a; \mathbf{z}_b)
        = \prod_{i=1}^a \prod_{j=1}^b\frac{1}{1 - x_i z_j}
        $$
        \item Lemma~\ref{lm:main-gr} directly shows %(for $c \to \infty$):
        $$\sum_{\ell(\lambda) \le \min(a,b)}s_{\lambda}(\mathbf{x}_a) s_{\lambda}(\mathbf{z}_b)
        = \sum_{\ell(\lambda) \le b} g_{\lambda}(\mathbf{x}_a;\mathbf{z}_b).
        $$
    \end{itemize}
\end{remark}

\begin{remark}\label{remark:direct}
  Our proof can be converted to a direct bijection using `jeu de taquin' like operations on paths described in \cite{ay22}. 
  \begin{figure}[h]
  \begin{center}
      \centering
      \begin{minipage}{0.4\textwidth}
          \centering
          \resizebox{0.8\textwidth}{!}{
              \pictureslideA
          }
          \subcaption{{\scriptsize Initial non-intersecting path system $\mathbf{P}$.}}
      \end{minipage}
      \qquad 
      %\quad
      \begin{minipage}{0.4\textwidth}
          \centering
          \resizebox{0.8\textwidth}{!}{
              \pictureslideB
      }
      \subcaption{\scriptsize Operation $\mathsf{slide}_0$ applied: path $P_i$ travels in $x = i$ in its tail (orange).
      }
      \end{minipage}
      \qquad
      \begin{minipage}{0.4\textwidth}
          \centering
          \resizebox{0.8\textwidth}{!}{
              \pictureslideC
          }
          \subcaption{\scriptsize Operation $\mathsf{slide}_1$ applied: floor and wall parts of $P_i$ are combined into path travelling in $x = i$ plane.
          }
      \end{minipage}
      %\quad
      \begin{minipage}{0.4\textwidth}
          \centering
          \resizebox{0.8\textwidth}{!}{
              \pictureslideD
          }
          \subcaption{\scriptsize The resulting plane partition. }
      \end{minipage}
  \caption{From a non-intersecting path system to plane partition. }
  \label{fig:slide}
  \end{center}
\end{figure}
  In particular, we defined the operations $\{\mathrm{slide}_k\}_{k \ge 0}$,\footnote{As their definitions are somewhat technical and long, we do not reproduce them here. } %Operation $\mathrm{slide}_k$ 
  describing procedure which transforms 3d non-intersecting path system $\mathbf{P}^{(i)}: \mathbf{A} \to \mathbf{B}$ to 3d non-intersecting path system $\mathbf{P}^{(i+1)}: \mathbf{A} \to \mathbf{B}$. 
  Each path $P^{(i)}_j$ travels as follows: first crawls by the floor $y = 0$, then climbs up the wall $z = k$, and then travels on the plane $x = i$. 
  Then starting from $\mathbf{P}^{(0)} := \mathbf{P} \in N(\mathbf{A}, \mathbf{B})$ and applying consequently $\mathrm{slide}_{b-1}\circ\ldots\circ \mathrm{slide}_{0}$ to $\mathbf{P}$ we can construct a new path system $\mathbf{P}': \mathbf{A}\to \mathbf{B}$ with each $P'_i$ travelling in $x = i$ plane, with steps $(0,1,0)$, $(0,0,1)$ and $(0,1,1)$, where the latter step type corresponds to corners of the new plane partition, see Figure~\ref{fig:slide} (cf. \cite[Fig.~7]{ay22}). This defines a direct weight preserving bijection between plane partitions enumerated by corners in $g_{\lambda}(\mathbf{x}; \mathbf{z})$ and non-intersecting path systems in $D(a,b,c)$.
\end{remark}

\section{Equidistribution theorem}
In this section we show implications of Section~\ref{sec:main-th}. %are demonstrated. 

\begin{theorem}[= Theorem~\ref{th:main-equi}]\label{th:sec-equi}\label{th:main-cross}
  We have
  \begin{align}\label{eq:th-equi}
    \sum_{\pi\in \mathrm{PP}(a,b,c)} q^{|\pi|}t^{\mathrm{tr}(\pi)} = \sum_{\pi \in \mathrm{PP}(a,b,c)} q^{|\pi|_{ch}}t^{\mathrm{cor}(\pi)}
  \end{align}
  % where $\pi$ runs over $\mathrm{PP}(a,b,c)$.
\end{theorem}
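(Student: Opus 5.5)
We specialize both sides of Lemma~\ref{lm:main-gr}, or rather of the path-system identity behind it, and then interpret each side combinatorially. The natural choice is
\[
x_i = t\,q^{i-1/2}\quad(i\in[a]),\qquad z_j = q^{j-1/2}\quad(j\in[b]),
\]
or any variant in which $x_i z_j = t q^{i+j-1}$; only such products appear in corner weights. With this choice, the right-hand side of Lemma~\ref{lm:main-gr}, unfolded through the definition of $g_\mu$, becomes
\[
\sum_{\pi} \prod_{(i,j,k)\in\Cor(\pi)} t\,q^{i+j-1} \;=\; \sum_{\pi} q^{|\pi|_{ch}} t^{\cor(\pi)}.
\]
Here $\pi$ runs over all plane partitions with $\sh_1(\pi)=\mu\subseteq[b]\times[c]$ and entries in rows $\le a$. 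We must check that this index set is exactly $\mathrm{PP}(a,b,c)$ under the paper's rotated conventions, with the side shape in the $b\times c$ face and $x_i$ indexing the $a$-direction. Getting these coordinate conventions right is routine bookkeeping, but it has to be done carefully.

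For the left-hand side we need $\sum_{\lambda\subseteq[\min(a,b)]\times[c]} s_\lambda(\mathbf{x}_a)s_\lambda(\mathbf{z}_b)$ to equal the volume--trace generating function of $\mathrm{PP}(a,b,c)$. We get this from the non-intersecting path picture rather than from RSK. In Lemma~\ref{lm:main-gr} the path system splits at the points $C_i$ into a floor system and a wall system. The floor system $\mathbf{A}\to\mathbf{C}$ is a column-strict filling of $\lambda$ with entries $\le b$, and the wall system $\mathbf{C}\to\mathbf{B}$ is a filling with entries $\le a$. Alternatively, following Figure~\ref{fig:picturebox}(b), we read the same system as paths $Q_i$ in the planes $x=i$, which trace the profile of a boxed plane partition.

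The cleaner route is the standard correspondence. A plane partition $\pi\in\mathrm{PP}(a,b,c)$ decomposes along its diagonal into a pair of reverse plane/column-strict tableaux of a common shape $\lambda$, namely its diagonal slices (Frobenius / Gansner-type). Here $\lambda$ is determined by the diagonal entries, $\lambda$ is the conjugate of the trace profile, so $\lambda_1\le c$, and the two tableaux have entries bounded by $a$ and $b$. Under $x_i=tq^{i-1/2}$ and $z_j=q^{j-1/2}$, the product of the two Schur monomials should contribute $q^{|\pi|}t^{\mathrm{tr}(\pi)}$. The count works as follows: each box of $\lambda$ collects one $x$ and one $z$, giving $t$ per diagonal unit, so the exponent of $t$ is $|\lambda|=\mathrm{tr}(\pi)$. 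Each layer of $\pi$ at height $k$ is a partition, and its Frobenius hooks contribute arm $+$ leg $-1$ boxes, which gives $q^{\text{area}}$. In path language, this is the ``orange lines'' enumeration in Figure~\ref{fig:picturebox}. The volume above the floor/wall point $C_i$ in the plane $x=i$ equals the sum of the indices of the left and right steps of $P_i$ minus the number of such pairs. This is exactly the exponent produced by $z_j\mapsto q^{j-1/2}$ and $x_i\mapsto q^{i-1/2}$, because paired steps contribute $(i-\tfrac12)+(j-\tfrac12)=i+j-1$, the Frobenius hook length.

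The main obstacle is this last step: proving that the path system $\mathbf{P}$, read as the stack of paths $Q_i$ in the planes $x=i$, is a bijection onto $\mathrm{PP}(a,b,c)$ under which $\sum_i(\text{weight exponent of }P_i)=|\pi|$ and the number of right steps equals $\mathrm{tr}(\pi)$. I would prove it slice by slice. Fix the plane $x=i$. The slice of $\pi$ there is an ordinary partition $\lambda^{(i)}\subseteq[a]\times[b]$, as in Figure~\ref{fig:picturebox}. The floor part of $P_i$ records its leg lengths $b_r$ as indices of left steps, and the wall part records its arm lengths $a_r$ as indices of right steps. So $|\lambda^{(i)}|=\sum_r(a_r+b_r-1)$ and $d(\lambda^{(i)})$ is the number of right steps. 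Non-intersection of the $P_i$ then corresponds to the nesting $\lambda^{(i)}\supseteq\lambda^{(i+1)}$, which is what makes the slices assemble into a plane partition. Summing over $i$ gives volume and trace, and the $t$-count $\sum_i d(\lambda^{(i)})=\mathrm{tr}(\pi)$ holds because the diagonal entry $\pi_{r,r}$ counts the slices containing $(r,r)$. Combining both evaluations of $D(a,b,c)$ yields \eqref{eq:th-equi}.
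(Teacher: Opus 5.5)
Your proposal is correct and is essentially the paper's proof. A specialization with $x_iz_j\mapsto tq^{i+j-1}$ turns the $g$-side directly into the corner-hook/corner enumeration; the paper takes $x_i=q^{i}$, $z_j=tq^{j-1}$, which is equivalent to your half-integer choice because every monomial has equal $x$- and $z$-degree. The non-intersecting path side is then read slice by slice via the projection $P_i\mapsto Q_i$ onto the plane $x=i$, with left/right steps matched to the Frobenius legs/arms of $\lambda^{(i)}$ and non-intersection matched to nesting. The detour through a Gansner-type diagonal decomposition is not needed, since your final slice-by-slice argument is exactly the one the paper uses.
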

\begin{proof}
  We refer to the Figure~\ref{fig:picturebox} for visualization. 
  In Lemma~\ref{lm:main-gr} set $x_i = q^{i}, z_i = tq^{i-1}$ into $D(a,b,c)$ to obtain:
  \begin{align*}
    \sum_{\lambda \subseteq [\min(a,b)]\times [c]} s_{\lambda}(q,q^2,\ldots,q^{a}) s_{\lambda}(t,tq,\ldots,tq^{b-1}) 
    &= \left[ \sum_{P \in N(\mathbf{A}, \mathbf{B})} w(\mathbf{P}) \right]_{x_i \mapsto q_i,\ z_i \mapsto tq^{i-1}}
    \\
    &=
    \sum_{\lambda \subseteq [b] \times [c]} g_{\lambda}(q,\ldots,q^{a}; t,\ldots,tq^{b-1}).
\end{align*}
  On the RHS we already obtain enumeration of plane partitions by corner-hook volume:
  \begin{align*}
    g_{\lambda}(q,\ldots,q^{a}; t,\ldots,tq^{b-1})
    = &\sum_{\pi \in \mathrm{PP}(a,b,c):~\mathrm{sh}_1(\pi) = \lambda} ~\prod_{(i,j,k) \in \mathrm{Cor}(\pi)} q^{j} \cdot t q^{i-1} 
    \\= &\sum_{\pi \in \mathrm{PP}(a,b,c):~\mathrm{sh}_1(\pi) = \lambda} ~t^{\cor(\pi)} q^{|\pi|_{ch}}.
  \end{align*}
  Hence, the RHS enumerates plane partitions by corners in the box $\mathrm{PP}(a,b,c)$.

  On the LHS, we interpret each term as a non-intersecting path system $\mathbf{P}$. View each path $P_i \in \mathbf{P}$ in other way. Project path $P_i: A_i \to B_i$ to the plane $x = i$ along the vector $(1,1,1)$ to obtain the path $Q_i: A_i \to B_i$ which uses $(0,1,0)$ and $(0,0,1)$ steps and travels within the plane $x = i$. It is easy to see (visually) that this is a bijection. See Figure~\ref{fig:projection} for a visual explanation.
  
  The path $Q_i$ is then the boundary of some partition $\lambda^{(i)} \subseteq [a] \times [b]$ (of its diagram), drawn in the plane $x = i$ (in French notation). 
  Since the paths are non-intersecting, $\pi := (\lambda^{(1)}\supseteq\ldots \supseteq \lambda^{(c)})$ is a plane partition, where $\pi_i := \lambda^{(i)}$ is its $i$-th row. This defines a bijection between non-intersecting path systems $\mathbf{P}$ enumerated by $D(a,b,c)$ and plane partitions $\pi \in \mathrm{PP}(a,b,c)$.

\begin{figure}%[h]
    \centering
    {\scriptsize
    \pictureprojA
    \qquad\qquad\qquad \pictureprojB
    }
    \caption{The path $P_i$ from $A_i = (i,0,c)$ to $B_i = (i,b,0)$ intersecting OX at point $C_i = (i-\ell_i, 0, 0)$. (a) The Path $P_i$ and $Q_i$ coincide under this view angle.
    (b) The path $Q_i$ (orange) travels on the plane $x = i$ and bounds the partition $\lambda^{(i)} = (5,4,4,2)$, the projection lines are displayed to see the correspondence between the paths $P_i$ and $Q_i$.
    }
    \label{fig:projection}
\end{figure}

  Let us show that the contribution of $w(\mathbf{P})$ matches the required term $t^{\mathrm{tr}(\pi)} q^{|\pi|}$.
  For each $i \in [c]$ write the partition $\lambda^{(i)} = (a_1,\ldots,a_{\ell_i} | b_1 \ldots, b_{\ell_i})$ in Frobenius notation, where $\ell_i := d(\lambda^{(i)})$. Then the path $P_i$ has $2\ell_i$ edges, and moreover, the hook $(a_j, b_j)$ corresponds to the pair of edges of $P_i$: floor left edge $z_{b_j}$ and wall right edge $x_{a_j}$ (see orange lines in Figure~\ref{fig:picturebox}). Thus, for each $i \in [c]$ we have 
  $$
    w(P_i) = \prod_{j=1}^{\ell_i} z_{b_{j}} x_{a_j} 
    = \prod_{j=1}^{\ell_i} q^{b_j} \cdot t q^{a_j-1} 
    = q^{\sum (a_j + b_j - 1)} t^{\ell_i} = q^{|\lambda^{(i)}|} t^{\ell_i}
  $$
  and consequently
  $$
    w(\mathbf{P}) = \prod_{i=1}^a w(P_i) = q^{\sum |\lambda^{(i)}|}~ t^{\sum \ell_i} = q^{|\pi|} t^{\mathrm{tr}(\pi)},
  $$
  since $\mathrm{tr}(\pi) = \sum_{i=1}^c d(\lambda^{(i)})$.
  This completes the proof.
\end{proof}

As a byproduct of the above discussions we obtain the following corollaries.
\begin{corollary}[Fixed trace generating function]
  For %any $a,b,c \in \mathbb{N}$ and
  a partition $\lambda \in  \mathrm{P}(\min(a,b), c)$ we have:
  $$
  s_{\lambda}(q,q^2,\ldots,q^{a}) s_{\lambda}(t,tq,\ldots,tq^{b-1}) = \sum_{\pi \in \mathrm{PP}(a,b,c): ~\mathrm{Tr}(\pi) = \lambda} q^{|\pi|} t^{\mathrm{tr}(\pi)},
  $$
  where $\mathrm{Tr}(\pi) := (\pi_{1,1},\pi_{2,2}, \ldots, \pi_{d(\pi),d(\pi)})$ is the vector of non-zero diagonal entries.
\end{corollary}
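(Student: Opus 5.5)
The plan is to refine the first computation in the proof of Lemma~\ref{lm:main-gr} by fixing the intersection points $\mathbf{C}$ rather than summing over them. I will then read the resulting fixed-$\mathbf{C}$ contribution through the projection bijection from the proof of Theorem~\ref{th:sec-equi}.

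First, fix $\lambda \in \mathrm{P}(\min(a,b),c)$ and let $\mathbf{C}=(C_1,\ldots,C_c)$ with $C_i=(i-\lambda'_i,0,0)$. Every non-intersecting system $\mathbf{P}\in N(\mathbf{A},\mathbf{B})$ passing through exactly these points splits into a non-intersecting floor system $\mathbf{A}\to\mathbf{C}$ and a non-intersecting wall system $\mathbf{C}\to\mathbf{B}$. Conversely, any such pair concatenates to a non-intersecting system, since floor and wall meet only along the $x$-axis at the points $C_i$. Applying the Lindstr\"om--Gessel--Viennot lemma separately on the floor and on the wall, the sum of $w(\mathbf{P})$ over systems through $\mathbf{C}$ equals $\det(e_{\lambda'_i+j-i}(\mathbf{z}_b))\det(e_{\lambda'_i+j-i}(\mathbf{x}_a))$. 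By Proposition~\ref{prop:schur} this is $s_\lambda(\mathbf{z}_b)s_\lambda(\mathbf{x}_a)$. I then specialize $x_i=q^i$ and $z_i=tq^{i-1}$, exactly as in the proof of Theorem~\ref{th:sec-equi}.

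Second, I use the weight computation from that proof. Under the projection, $P_i$ corresponds to the $i$-th row partition $\lambda^{(i)}=(\pi_{i,1},\pi_{i,2},\ldots)$ of the plane partition $\pi$, and $w(\mathbf{P})=q^{|\pi|}t^{\mathrm{tr}(\pi)}$. It remains to identify $\lambda$ with $\mathrm{Tr}(\pi)$. The point $C_i$ is $(i-\ell_i,0,0)$, where $\ell_i$ is the number of left floor steps of $P_i$. That number is the Durfee side $d(\lambda^{(i)})$, so $\lambda'_i=d(\lambda^{(i)})$. Hence $\lambda'_i=\#\{j:\pi_{i,j}\ge j\}$.

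The main obstacle is the following combinatorial check. We need $\#\{j:\pi_{i,j}\ge j\}=\#\{j:\pi_{j,j}\ge i\}$, i.e.\ the conjugate of $(\pi_{j,j})_j$ equals $(d(\lambda^{(i)}))_i$, possibly after accounting for the paper's orientation of rows and the box. My approach is a double count of the set $S=\{(i,j):\pi_{i,j}\ge j\}$ restricted appropriately, and I expect I will need to adjust the definition of $\lambda^{(i)}$ (rows versus layers).

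More precisely, in the projection picture $\lambda^{(i)}$ is the $i$-th horizontal level set $\{(r,s):\pi_{r,s}\ge i\}$ of the plane partition, i.e.\ the slice at height $x=i$. With this reading, $d(\lambda^{(i)})=\#\{j:(j,j)\in\lambda^{(i)}\}=\#\{j:\pi_{j,j}\ge i\}$. This is exactly the $i$-th part of the conjugate of $\mathrm{Tr}(\pi)$. Therefore systems through $\mathbf{C}$ biject with $\pi\in\mathrm{PP}(a,b,c)$ having $\mathrm{Tr}(\pi)=\lambda$, and summing $q^{|\pi|}t^{\mathrm{tr}(\pi)}$ over them gives the claim. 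Finally, note that $\lambda$ ranges over $\mathrm{P}(\min(a,b),c)$: diagonal entries are at most $c$ and number at most $\min(a,b)$. For any other $\lambda$ both sides vanish or are empty.
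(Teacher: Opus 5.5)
Your proposal is correct and follows essentially the argument the paper leaves implicit (it states the corollary only ``as a byproduct of the above discussions''): fix $\mathbf{C}$ in the first decomposition of Lemma~\ref{lm:main-gr}, apply LGV separately on the floor and the wall, and read off $\lambda'_i=d(\lambda^{(i)})$ through the projection bijection of Theorem~\ref{th:sec-equi}. You should drop the intermediate row reading $\lambda'_i=\#\{j:\pi_{i,j}\ge j\}$; the level-set reading $\lambda^{(i)}=\{(r,s):\pi_{r,s}\ge i\}$ that you settle on is the one consistent with the paper's identity $\mathrm{tr}(\pi)=\sum_i d(\lambda^{(i)})$, and it gives $\lambda'=\mathrm{Tr}(\pi)'$ directly.
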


\begin{corollary}[Boxed PP volume enumeration]
  For %any $a,b,c \in \mathbb{N}$ and
  a partition $\lambda \in  \mathrm{P}(\min(a,b), c)$ we have:
  $$
  D(a,b,c) |_{x_i \to q^i, z_j \to q^{j-1}} = \sum_{\pi \in \mathrm{PP}(a,b,c)} q^{|\pi|}.
  $$
\end{corollary}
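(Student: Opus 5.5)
The plan is to specialize $t=1$ in the proof of Theorem~\ref{th:main-cross}. The proposal follows the two interpretations of the path systems enumerated by $D(a,b,c)$. By Lemma~\ref{lemma:lgv}, $D(a,b,c)=\sum_{\mathbf{P}\in N(\mathbf{A},\mathbf{B})} w(\mathbf{P})$. The specialization $x_i\mapsto q^i$, $z_j\mapsto q^{j-1}$ is exactly the substitution $x_i=q^i$, $z_j=tq^{j-1}$ used in that proof, evaluated at $t=1$. (The hypothesis on $\lambda$ in the statement plays no role, since the left side does not depend on $\lambda$.)

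First, recall the bijection constructed there. Project each path $P_i:A_i\to B_i$ along $(1,1,1)$ onto the plane $x=i$. This gives a lattice path $Q_i$ bounding a partition $\lambda^{(i)}\subseteq[a]\times[b]$. Non-intersection of the $P_i$ corresponds to the nesting $\lambda^{(1)}\supseteq\cdots\supseteq\lambda^{(c)}$. Hence $\mathbf{P}\mapsto\pi=(\lambda^{(1)},\ldots,\lambda^{(c)})$ is a bijection $N(\mathbf{A},\mathbf{B})\to\mathrm{PP}(a,b,c)$.

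Second, compute the weights under this bijection. Write $\lambda^{(i)}=(a_1,\ldots,a_{\ell_i}\mid b_1,\ldots,b_{\ell_i})$ in Frobenius notation. The path $P_i$ then consists of $\ell_i$ left floor edges with weights $z_{b_j}$ and $\ell_i$ right wall edges with weights $x_{a_j}$, each hook contributing one edge of each kind. Under the specialization,
$$w(P_i)=\prod_j q^{a_j}q^{b_j-1}=q^{\sum_j(a_j+b_j-1)}=q^{|\lambda^{(i)}|}.$$
Taking the product over $i$ gives $w(\mathbf{P})=q^{|\pi|}$. Summing over $\mathbf{P}$ yields the claim.

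The main obstacle is the hook-to-edge correspondence: the claim that the $j$-th left floor step and the $j$-th right wall step of $P_i$ carry indices exactly $b_j$ and $a_j$. This is where the careful work lies. I would verify it by tracking the projection: a floor left step from height $z=m$ projects onto a horizontal segment of $Q_i$ at depth $m$, and it must be checked that this depth equals the leg length including the diagonal box. The wall side is handled symmetrically. Everything else is already contained in the proof of Theorem~\ref{th:main-cross}. Alternatively, one can argue purely formally by setting $t=1$ in the identity established there, whose left side at $t=1$ is $\sum_{\pi}q^{|\pi|}$.
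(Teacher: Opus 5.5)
Your proposal is correct and is essentially the paper's argument. The corollary is stated as a byproduct of the proof of Theorem~\ref{th:main-cross}: the LGV lemma, the $(1,1,1)$-projection bijection $N(\mathbf{A},\mathbf{B})\to\mathrm{PP}(a,b,c)$, and the hook-to-edge weight correspondence, all specialized at $t=1$.

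One small correction to your sketched verification. The $r$-th floor left step, starting at height $z=m$, does not project to height $m$; it projects to height $m+r-1$, because the projection along $(1,1,1)$ shifts $z$ by the $r-1$ earlier left steps. That projected height is the row length $\lambda^{(i)}_r$, so it is precisely this shift that gives $m=\lambda^{(i)}_r-r+1$, the diagonal-inclusive hook part.
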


\begin{remark}
Notably, the latter corollary is obtained without the use of the RSK correspondence. Indeed, $D(a,b,c)$ enumerates non-intersecting paths by LGV lemma (Lemma~\ref{lemma:lgv}) and the interpretation of $w(\mathbf{P})$ for $\mathbf{P} \in N(\mathbf{A}, \mathbf{B})$ after specialization is direct and visual.
\end{remark}

\begin{remark}
Since the volume and the corner-hook volume are equidistributed, it is natural to ask if they also have {\it symmetric joint distribution}, i.e. 
whether 
$$
\sum_{\pi} q^{|\pi|} t^{|\pi|_{ch}} = \sum_{\pi} q^{|\pi|_{ch}} t^{|\pi|_{}}
$$
holds. While (as we checked) this equality does {\it not} hold over all boxed plane partitions, it can be proved that it holds for sums over $\pi \in \mathrm{PP}(2,2,c)$. 
\end{remark}

\section{Partition corner hook area}
In this section we recover $1$-dimensonal version of the equidistriution theorem, see Remark~\ref{remark:path} for path interpretation.

For a partition $\lambda$, recall that $|\lambda| = |D(\lambda)|$ is its area, $d(\lambda) := \max_{\lambda_k \ge k}  k$ is the size of its Durfee square, and $h_\lambda(i,j) := (\lambda_i - i) + (\lambda'_j - j) + 1$ denotes the \textit{hook length} of the cell $(i,j)$. We clearly have
$$
  |\lambda| = \sum_{i=1}^{d(\lambda)} h_{\lambda}(i, i).
$$
Denote by $p(n, k)$ the number of partitions $\lambda$ with $|\lambda|=n$ and $d(\lambda)=k$ and set $p(n) = \sum_k p(n,k)$ the total number of partitions of $n$.

\begin{definition}
  Let $\lambda$ be a partition. The cell $(i, j) \in D(\lambda)$ is called a \textit{corner}, if $(i+1,j)$ and $(i,j+1)$ are not in $D(\lambda)$. The set of corners is denoted by $\mathrm{Cor}(\lambda)$ and its size is $\mathrm{cor}(\lambda) := |\mathrm{Cor}(\lambda)|$. 
  By $\mathrm{ch}_{}(i,j) := i + j - 1$ we denote a \textit{cohook length} of the cell $(i,j)$. Define the \textit{cohook area} $|\lambda|_{c}$ as follows: 
  $$
    |\lambda|_{c} := \sum_{(i,j)\in\mathrm{Cor}(\lambda)} \mathrm{ch}(i, j).
  $$
  For example, the partition $\lambda = (2,1)$ with $|\lambda| = 3$ has two corners $(1,2)$ and $(2,1)$, and hence,  $|\lambda|_{c} = 2 + 2 = 4$. 
  Denote by $q(n, k)$ the number of partitions with $|\lambda|_c = n$ and $\mathrm{cor}(\lambda)=k$ and set $q(n) = \sum_{k} q(n, k)$.
  
  Since any partition $\lambda$ is uniquely determined by its corners, let us introduce the \textit{corner notation}. By $[a_1,\ldots,a_k | b_1,\ldots, b_k]$ we denote a partition with corners at $(a_1,b_1),\ldots,(a_k,b_k)$, so that  
  $$
    a_1 > \ldots > a_k \ge 1 \quad\text{ and }\quad 
    1 \le b_1 < \ldots < b_k,
  $$
  since no two corners can occupy the same row or column. For example, 
  \begin{align*}
      \lambda = [4,2,1|3,5,6] &= \ytableausetup{textmode, boxsize=2em}\ytableaushort
                {{*(orange)} \none \none \none \none {{\small (1,6)}},
                 \none {*(orange)} \none \none {{\small (2,5)}},
                 \none \none {*(orange)},
                 \none \none {{\small (4,3) }}
                } * {6,5,3,3} = (6,4,1|4,3,2),\\
    &\text{with }|\lambda|_c = 18\text{ and }|\lambda| = 17.
  \end{align*}
\end{definition}

\begin{theorem}
  For any $n, k \ge 1$ we have:
  $$
    p(n, k) = q(n, k),
  $$
  i.e. the paired partition statistics $(|\cdot|, d(\cdot))$ and $(|\cdot|_c, \mathrm{cor}(\cdot))$ are equidistributed.
\end{theorem}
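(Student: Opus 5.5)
The plan is to build an explicit bijection that compares the Frobenius notation and the corner notation introduced just above. The point is that both notations encode a partition by a pair of strictly monotone sequences of equal length, and both statistics are computed by the same formula $\sum_i (a_i+b_i-1)$.

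\emph{Step 1: Frobenius side.} For $d(\lambda)=k$ write $\lambda=(a_1,\ldots,a_k\mid b_1,\ldots,b_k)$ with the modified arms and legs of the preliminaries. Then $a_1>\cdots>a_k\ge 1$ and $b_1>\cdots>b_k\ge 1$, and $|\lambda|=\sum_{i=1}^k(a_i+b_i-1)$. I will check that the map from $\lambda$ to this pair is a bijection. Its domain is all partitions with Durfee side $k$, and its target is all pairs of strictly decreasing $k$-tuples of positive integers. Surjectivity holds because any such pair reconstructs a partition, namely the union of the nested hooks at $(i,i)$: arm $a_i$ and leg $b_i$ are forced to be at least $k-i+1$ by strict decrease, so the hooks nest correctly. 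This is the standard Frobenius correspondence, and the only care needed is the shifted convention.

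\emph{Step 2: corner side.} A partition with $\mathrm{cor}(\lambda)=k$ is written $[a_1,\ldots,a_k\mid b_1,\ldots,b_k]$ with $a_1>\cdots>a_k\ge1$ and $1\le b_1<\cdots<b_k$. By definition $|\lambda|_c=\sum_{i=1}^k(a_i+b_i-1)$. I will verify that every such pair of sequences arises from exactly one partition: take $\lambda$ to be the union of the rectangles $[a_i]\times[b_i]$. The cells $(a_i,b_i)$ are then exactly its corners, because the opposite monotonicities make each such cell extremal and no other cell is extremal.

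\emph{Step 3: the bijection.} Define $\Psi\colon(a_1,\ldots,a_k\mid b_1,\ldots,b_k)\mapsto[a_1,\ldots,a_k\mid b_k,\ldots,b_1]$, reversing the second sequence. This is a bijection between $\{\lambda: d(\lambda)=k\}$ and $\{\lambda:\mathrm{cor}(\lambda)=k\}$. It preserves the multiset $\{a_i\}\cup\{b_i\}$, so $|\lambda|=|\Psi(\lambda)|_c$. Hence $p(n,k)=q(n,k)$.

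As a bonus, this also gives the boxed statement of Theorem~\ref{th:main-partitions}. In Frobenius form $\lambda_1=a_1$ and $\ell(\lambda)=b_1$, while in corner form the number of rows is $a_1$ and the number of columns is $b_k$, the largest $b$. So membership in $\mathrm{P}(a,b)$ is preserved, up to the row/column convention.

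The main obstacle is only Step 2, showing that the corner notation is a genuine bijection with arbitrary pairs of oppositely monotone sequences. I would handle it directly via the union-of-rectangles description. Alternatively, the result should follow by taking $c=1$ in Theorem~\ref{th:main-cross}, since a plane partition with one layer is an ordinary partition, its trace is the Durfee side and its corner-hook volume is the cohook area. However, the direct bijection is cleaner.
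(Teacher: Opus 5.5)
Your proof is correct and is essentially the paper's: the paper uses the same Frobenius-to-corner-notation bijection $\Phi\colon(a_1,\ldots,a_k\mid b_1,\ldots,b_k)\mapsto[b_1,\ldots,b_k\mid a_k,\ldots,a_1]$ and notes that $\sum_i(a_i+b_i-1)$ is preserved, and you additionally verify that both notations are genuine encodings, which the paper takes for granted. The only difference is that the paper puts the legs in the row slot, so your $\Psi$ is $\Phi$ composed with conjugation; this is harmless for $p(n,k)=q(n,k)$, but it is why $\Phi$ maps $\mathrm{P}(m,n)$ into itself in the boxed version while your $\Psi$ lands in the transposed box (and there the number of columns of $\Psi(\lambda)$ is $b_1=\ell(\lambda)$ in Frobenius indexing, not $b_k$).
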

\begin{proof}
  Let $\lambda = (a_1,\ldots,a_k | b_1, \ldots, b_k)$ be a partition of $n$ written in Frobenious notation. Consider the following map 
  $$
  \Phi: \lambda = (a_1,\ldots,a_k | b_1, \ldots, b_k) \mapsto [b_1,\ldots,b_k | a_k, \ldots, a_1] =: \mu
  $$
  from Frobenius to corner notation. 
  Then we have $|\lambda| = \sum (a_i + b_i - 1) = \sum (a_i + b_{k+1-i} - 1) = |\mu|_{c}$.
  This shows that $\Phi$ is in fact a bijection 
  $$
    \Phi: \{\lambda: |\lambda| = n,\ d(\lambda)=k\} \to \{ \lambda: |\lambda|_{c} = n,\ \mathrm{cor}(\lambda)=k \}
  $$ 
  which implies the claim.
\end{proof}
\begin{example}
Let us illustrate the bijection on the partition $\lambda=(6,4,1|4,3,2)$, the result is equal to $\Phi(\lambda) = \mu = [2,3,4|6,4,1]$:
\newcommand{\yc}[4]{%
  \begin{scope}[shift={({#1},{#2})}]
    \draw[black] (0,0) rectangle (1,1);
    \begin{scope}
      \clip (0,0) rectangle (1,1);
      #3
    \end{scope}
    \node at (0.5,0.5) {#4};
  \end{scope}
}
% pattern shorthands
\newcommand{\porange}{%
  \fill[pattern={Lines[angle=45,distance=3pt,line width=0.35pt]},
        pattern color=orange,
        opacity=0.6] (0,0) rectangle (1,1);}

\newcommand{\pblue}{%
  \fill[pattern={Lines[angle=-45,distance=3pt,line width=0.35pt]},
        pattern color=cyan!70!blue,
        opacity=0.6] (0,0) rectangle (1,1);}

\newcommand{\pgray}{%
  \fill[pattern={Lines[angle=0,distance=3pt,line width=0.35pt]},
        pattern color=gray,
        opacity=0.6] (0,0) rectangle (1,1);}
\[
\Phi:\qquad
\lambda=
\begin{tikzpicture}[baseline={(current bounding box.center)},scale=0.8]
  % row 1
  \yc{0}{3}{\porange}{\scriptsize$(4,6)$}
  \yc{1}{3}{\porange}{}
  \yc{2}{3}{\porange}{}
  \yc{3}{3}{\porange}{}
  \yc{4}{3}{\porange}{}
  \yc{5}{3}{\porange}{}
  % row 2
  \yc{0}{2}{\porange}{}
  \yc{1}{2}{\pblue}{\scriptsize$(3,4)$}
  \yc{2}{2}{\pblue}{}
  \yc{3}{2}{\pblue}{}
  \yc{4}{2}{\pblue}{}
  % row 3
  \yc{0}{1}{\porange}{}
  \yc{1}{1}{\pblue}{}
  \yc{2}{1}{\pgray}{\scriptsize$(2,1)$}
  % row 4
  \yc{0}{0}{\porange}{}
  \yc{1}{0}{\pblue}{}
  \yc{2}{0}{\pgray}{}
\end{tikzpicture}
\qquad\longrightarrow\qquad
\mu=
\begin{tikzpicture}[baseline={(current bounding box.center)},scale=0.8]
  % row 1
  \yc{0}{3}{\porange}{}
  \yc{1}{3}{}{}
  \yc{2}{3}{}{}
  \yc{3}{3}{\pblue}{}
  \yc{4}{3}{}{}
  \yc{5}{3}{\pgray}{}
  % row 2
  \yc{0}{2}{\porange}{}
  \yc{1}{2}{\porange}{}
  \yc{2}{2}{\porange}{}
  \yc{3}{2}{\porange\pblue}{}
  \yc{4}{2}{\porange}{}
  \yc{5}{2}{\porange\pgray}{\scriptsize$(2,6)$}
  % row 3
  \yc{0}{1}{\pblue\porange}{}
  \yc{1}{1}{\pblue}{}
  \yc{2}{1}{\pblue}{}
  \yc{3}{1}{\pblue}{\scriptsize$(3,4)$}
  % row 4
  \yc{0}{0}{\porange}{\scriptsize$(4,1)$}
\end{tikzpicture}.
\]
\end{example}
\vspace{0.5em}
The following corollary is immediate.

\begin{corollary}
  For any $n \ge 1$, we have: $p(n) = q(n)$.
\end{corollary}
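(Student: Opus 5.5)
The plan is to deduce the corollary directly from the theorem immediately preceding it, namely $p(n,k) = q(n,k)$ for all $n,k \ge 1$, by summing over the second statistic. By definition $p(n) = \sum_k p(n,k)$ and $q(n) = \sum_k q(n,k)$. For a fixed $n \ge 1$, both sums are finite and range over $k \ge 1$. Indeed, a partition of area $n\ge1$ has $1 \le d(\lambda) \le n$. Likewise, a partition with cohook area $n$ is nonempty, so it has at least one corner. It also has at most $n$ corners, because each corner $(i,j)$ contributes $\mathrm{ch}(i,j) = i+j-1 \ge 1$ to $|\lambda|_c$. The terms with $k=0$ therefore vanish on both sides for $n \ge 1$, since only the empty partition has $d=0$ or $\mathrm{cor}=0$, and it has $|\lambda| = |\lambda|_c = 0$.

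Summing the identity $p(n,k)=q(n,k)$ over $k \in [n]$ then gives $p(n)=q(n)$.

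Alternatively, the bijection $\Phi$ from the proof of the theorem may be used directly. Taking the union over all $k$, it maps $\{\lambda : |\lambda| = n\}$ bijectively onto $\{\mu : |\mu|_c = n\}$. Here one uses that every nonempty partition has a unique Frobenius notation and a unique corner notation, and that $\Phi$ preserves the number of parts $k$ in these notations.

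There is no real obstacle. The only point to check is that the index ranges of $k$ agree on both sides, and in particular that the $k=0$ case causes no discrepancy, which the observations above settle.
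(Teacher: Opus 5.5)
Your proof is correct and matches the paper. The paper calls the corollary immediate from $p(n,k)=q(n,k)$, and your argument (summing over $k$ after checking that the $k=0$ terms vanish for $n\ge 1$ and the index ranges agree) is exactly that, while your alternative via $\Phi$ is just the union over $k$ of the bijections in the theorem's proof.
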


Analogous to Theorem~\ref{th:main-equi}, we present the boxed version.

\begin{theorem}[= Theorem~\ref{th:main-partitions}]\label{th:partequidistr}
  We have the following equidistribution of paired statistics $(|\cdot|, d(\cdot))$ and $(|\cdot|_{c}, \mathrm{cor}(\cdot))$ within the box $[m] \times [n]$:
    $$
        \sum_{\lambda \in \mathrm{P}(m, n)} q^{|\lambda|}t^{d(\lambda)} = \sum_{\lambda \in \mathrm{P}(m, n)} q^{|\lambda|_{c}} t^{\mathrm{cor}(\lambda)}.
    $$
\end{theorem}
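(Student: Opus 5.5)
The plan is to show that the bijection $\Phi$ from the proof of the preceding (unboxed) theorem, which sends Frobenius notation to corner notation, restricts to a bijection $\mathrm{P}(m,n)\to\mathrm{P}(m,n)$. That theorem already shows $\Phi$ preserves the paired statistics: $d(\lambda)=k$ becomes $\mathrm{cor}(\mu)=k$, and $|\lambda|=|\mu|_c$. So the boxed identity follows once we check that $\Phi$ maps the box into itself and that its inverse does too. I read $\mathrm{P}(m,n)$ as: at most $m$ rows and at most $n$ columns.

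First, I would record how the box condition reads in each notation.
\begin{itemize}
\item In the paper's Frobenius convention $\lambda=(a_1,\ldots,a_k\mid b_1,\ldots,b_k)$, with the diagonal box included in both arm and leg, we have $a_1=\lambda_1$ and $b_1=\lambda'_1=\ell(\lambda)$. Hence $\lambda\in\mathrm{P}(m,n)$ if and only if $a_1\le n$ and $b_1\le m$.
\item In corner notation $\mu=[r_1,\ldots,r_k\mid c_1,\ldots,c_k]$, with $r_1>\cdots>r_k\ge1$ and $1\le c_1<\cdots<c_k$, the longest row is $\mu_1=c_k$ (the corner in the first occupied row, which is the top row). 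The number of rows is $\ell(\mu)=r_1$ (the bottom corner). Hence $\mu\in\mathrm{P}(m,n)$ if and only if $r_1\le m$ and $c_k\le n$.
\end{itemize}

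Next, I would apply $\Phi$. It sends $\lambda$ to $\mu=[b_1,\ldots,b_k\mid a_k,\ldots,a_1]$, so $r_1=b_1\le m$ and $c_k=a_1\le n$. Thus $\Phi(\mathrm{P}(m,n))\subseteq\mathrm{P}(m,n)$.

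Then I would write down the inverse explicitly: $[r_1,\ldots,r_k\mid c_1,\ldots,c_k]\mapsto(c_k,\ldots,c_1\mid r_1,\ldots,r_k)$. The key point is that any two strictly decreasing sequences of positive integers of the same length $k$ form a valid Frobenius symbol of a partition with Durfee side $k$; this holds in the paper's convention, where arms and legs are $\ge1$. The same inequalities then show the inverse maps $\mathrm{P}(m,n)$ into itself. Since the two maps are mutually inverse on the full set of partitions, $\Phi$ is a bijection of $\mathrm{P}(m,n)$ preserving $(|\cdot|,d)\mapsto(|\cdot|_c,\mathrm{cor})$. Summing $q^{|\lambda|}t^{d(\lambda)}$ over $\mathrm{P}(m,n)$ and substituting $\mu=\Phi(\lambda)$ gives the claim.

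The only step needing care is confirming that $\Phi$ is genuinely onto the corner side. Concretely, every pair of strictly monotone sequences of equal length must be realized both as a Frobenius symbol and as a corner set. For corner notation this is clear: a partition is determined by its corners, and any staircase of cells with strictly decreasing rows and strictly increasing columns is the corner set of the partition it generates. For Frobenius notation, the shifted convention needs the check that $a_k\ge1$ and $b_k\ge1$ suffice, i.e.\ that the cell $(k,k)$ lies in the diagram. I expect this to be routine.
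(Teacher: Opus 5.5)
Your proposal is correct and follows the paper's proof. Both restrict $\Phi$ to $\mathrm{P}(m,n)$ and check that the box is preserved in both directions; your explicit criteria ($b_1\le m$, $a_1\le n$ on the Frobenius side and $r_1\le m$, $c_k\le n$ on the corner side) and your explicit inverse just spell out what the paper abbreviates as ``the reverse direction is similar.'' One small wording slip: the corner $(r_k,c_k)$ need not lie in row $1$ (e.g.\ $\mu=(2,2)$), but $\mu_1=c_k$ still holds, because the cell $(\mu'_{\mu_1},\mu_1)$ is always a corner.
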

\begin{proof}
  We are enough to verify that $D(\Phi(\lambda)) \subseteq [m] \times [n]$ whenever $D(\lambda) \subseteq [m] \times [n]$.
  Apply the map $\Phi$ to such $\lambda$ written in Frobenius notation as $(a_1,\ldots,a_k | b_1, \ldots, b_k)$. Then $\{a_i\}$ is a strictly decreasing subsequence of $[n]$, and similarly $\{b_i\}$ of $[m]$. Let $\mu := \Phi(\lambda) = [b_1,\ldots,b_k | a_k, \ldots, a_1]$ in corner notation, then each $a_i \in [n]$ and $b_i \in [m]$ ensuring that $D(\mu) \subseteq [m] \times [n]$. The reverse direction is similar.
\end{proof}

\begin{remark}\label{remark:path}
The bijection $\Phi$ can be (and in fact, was) recovered from Figure~\ref{fig:projection} or Figure~\ref{fig:phi}. Let $P$ be a path in $G$ from $A=(0,0,m)$ to $B=(0,n,0)$ whose projection along $(1,1,1)$ onto the plane $x=0$ traces the diagram of $\lambda$. Applying the slide operations of \cite{ay22} (see also Remark~\ref{remark:direct}) to $P$, we obtain a path $P':A\to B$ lying entirely on the plane $x=0$. Namely, 
define $P^{(0)}=P$, and for $k=1,\dots,m$ obtain $P^{(k)}$ from $P^{(k-1)}$ by moving its wall part from $z=k-1$ to $z=k$ along the corresponding floor edge, extending the $x=0$ part accordingly. %Then $P^{(k)}$ runs along the floor, climbs the shifted wall $z=k$, and continues in the plane $x=0$. 
Finally, set $P'=P^{(m)}$.
This may be viewed as continuous printing on the plane $x=0$, with the wall part as the printing head and the floor part as its trajectory.
\begin{figure}[h]
    \centering
    \begin{minipage}{0.32\textwidth}
        \centering
        \pictureoneslideA
    \end{minipage}
    \begin{minipage}{0.32\textwidth}
        \centering
        \pictureoneslideB
    \end{minipage}
    \begin{minipage}{0.32\textwidth}
        \centering
        \pictureoneslideC
    \end{minipage}

    \medskip

    \begin{minipage}{0.32\textwidth}
        \centering
        \pictureoneslideD
    \end{minipage}
    \begin{minipage}{0.32\textwidth}
        \centering
        \pictureoneslideE
    \end{minipage}
    \begin{minipage}{0.32\textwidth}
        \centering
        \pictureoneslideF
    \end{minipage}
    \caption{Path interpretation of the bijection $\Phi$.}\label{fig:phi}
\end{figure}
\end{remark}

%This establishes Theorem~\ref{th:main-partitions}. 

\section*{Acknowledgements}
We are grateful to Askar Dzhumadil'daev for encouraging us to write this paper and many inspiring conversations. 
We also thank the referees for useful comments. 

%This research was supported by the Science Committee of the Ministry of Science and Higher Education of the Republic of Kazakhstan (grant No. AP14869221).

\

%\newpage

\end{document}